\documentclass{amsart}%
\usepackage{amsfonts}
\usepackage{amsmath}
\usepackage{amssymb}
\usepackage{graphicx}%
\setcounter{MaxMatrixCols}{30}
\providecommand{\U}[1]{\protect\rule{.1in}{.1in}}
\newtheorem{theorem}{Theorem}
\theoremstyle{plain}

\newtheorem{definition}{Definition}

\newtheorem{lemma}{Lemma}

\newtheorem{remark}{Remark}

\numberwithin{equation}{section}
\begin{document}
\title[Approximation by Random Polynomials]{Approximation of Random Functions by Random Polynomials in the
Framework of Choquet's Theory of Integration}
\author{Sorin G. Gal$^{*}$}
\address{Department of Mathematics and Computer Science\\
University of Oradea\\
University\ Street No. 1, Oradea, 410087, Romania}
\email{galso@uoradea.ro, galsorin23@gmail.com}
\author{Constantin P. Niculescu}
\address{Department of Mathematics, University of Craiova, Craiova 200585, Romania}
\email{constantin.p.niculescu@gmail.com}
\thanks{$^{*}$Corresponding author: Sorin G. Gal. E-mail: galso@uoradea.ro}
\date{September 29, 2020}
\subjclass[2000]{Primary: 60G99, 41A10, 41A36, Secondary: 28A25.}
\keywords{Choquet integral, submodular capacity, random Bernstein polynomials,
approximation in Choquet-mean, approximation in capacity, Choquet $L^{p}%
$-modulus of continuity}
\dedicatory{ }
\begin{abstract}
Given a submodular capacity space, we prove the uniform convergence in
capacity and also the uniform convergence in the Choquet-mean of order $p\ge1$
with a quantitative estimate, of the multivariate Bernstein polynomials
associated to a random function. Applications to quantitative estimates
concerning the uniform convergence in capacity in the univariate case are given.
\end{abstract}
\maketitle

\section{Introduction}

In this paper we extend some old and new results on the approximation of
random functions by Bernstein random polynomials to the framework of Choquet's
theory of integrability. As is well known, these polynomials are among the
most studied and the most interesting polynomials used in the probabilistic
framework of approximation theory. We mention here the classical book of
Lorentz \cite{Lor} and the papers of Onicescu and Istr\u{a}\c{t}escu
\cite{Onic-2}, \cite{Onic-3}, Cenu\c{s}\u{a} and S\u{a}cuiu \cite{Cen-2}, Gal
\cite{Gal-Random-1}, \cite{Gal-Random-2}, and Gal and Villena
\cite{Gal-Random-3}. In the very recent papers of Adell and
C\'{a}rdenas-Morales \cite{Adell_2}, Sun and Wu \cite{Sun}, Wu, Sun and Ma
\cite{Wu_1} and Wu and Zhou \cite{Wu_2}, quantitative estimates for
approximation in probability of deterministic functions by random Bernstein
polynomials were obtained.

The papers cited above have motivated us to study the extension of the
approximation properties of random Bernstein polynomials in the much more
general framework provided by capacity spaces and the Choquet integral. Unlike
the case of probability measures, the capacities are nonadditive set
functions, and precisely the lack of additivity makes them useful in risk
theory (especially in decision making under risk and uncertainty). See
F\"{o}llmer and Schied \cite{FS} and Grabisch \cite{Gr}.

In Section 2 we present preliminaries on capacities and Choquet integral.
Section 3 is devoted to a description of various concepts of continuity of
random functions and of the convergence of sequences of random functions in
the setting of Choquet integral. Section 4 deals with approximation results by
random Bernstein polynomials of several variables in the framework of
capacities and Choquet integral. Our main results are Theorem \ref{thm1},
devoted to the approximation in the Choquet-mean of order $p\in\lbrack
1,\infty),$ and Theorem \ref{thm2}, devoted to the uniform approximation in
capacity by sequences of multivariate random Bernstein polynomials. In the
probabilistic case (and for functions of one real variable and $p=1$ for
Theorem \ref{thm1}), these results were previously proved respectively in
Cenu\c{s}\u{a} and S\u{a}cuiu \cite{Cen-2} and Onicescu and Istr\u{a}\c{t}escu
\cite{Onic-2}. In section 5 we obtain quantitative estimates for the
approximation in capacity by univariate Bernstein-type random polynomials,
generalizing recent results due to Adell and C\'{a}rdenas-Morales
\cite{Adell_2}, Sun and Wu \cite{Sun}, Wu, Sun and Ma \cite{Wu_1} and Wu and
Zhou \cite{Wu_2}, who considered only the framework of probability measures
and of deterministic functions.

\section{Preliminaries on capacities and Choquet integral}

For the convenience of the reader we will briefly recall some basic facts
concerning Choquet's theory of integrability with respect to a nondecreasing
set function (not necessarily additive). Full details are to be found in the
books of Denneberg \cite{Denn} and Grabisch \cite{Gr}.

Let $(\Omega,\mathcal{A})$\ be an arbitrarily fixed measurable space, that is,
a nonempty abstract set $\Omega$ endowed with a $\sigma$-algebra
${\mathcal{A}}$ of subsets of $\Omega.$

\begin{definition}
\label{def1} A set function $\mu:{\mathcal{A}}\rightarrow\mathbb{R}_{+}$ is
called a capacity if it verifies the following two conditions:

$(a)$ $\mu(\emptyset)=0$ and $\mu(\Omega)=1;$

$(b)~\mu(A)\leq\mu(B)$ for all $A,B\in{\mathcal{A}}$, with $A\subset B$.

A capacity $\mu$ is called subadditive if%
\[
\mu(A\bigcup B)\leq\mu(A)+\mu(B)
\]
and submodular \emph{(}or strongly subadditive\emph{)} if
\[
\mu(A\bigcup B)+\mu(A\bigcap B)\leq\mu(A)+\mu(B)\text{\quad}%
\]
for all $A,B\in{\mathcal{A}}.$

A capacity $\mu$ is called continuous from below (or lower continuous) if
\[
\lim_{n\to\infty}\mu(A_{n})=\mu\left(  \cup_{n=1}^{\infty}A_{n}\right)
\]
for every nondecreasing sequence $(A_{n})_{n}$ of sets in $\mathcal{A}$ such
that $\cup_{n=1}^{\infty}A_{n}\in\mathcal{A}$.

Analogously, a capacity $\mu$ is called continuous from above (or upper
continuous) if $\lim_{n\rightarrow\infty}\mu(A_{n})=\mu\left(  \cap
_{n=1}^{\infty}A_{n}\right)  $ for every nonincreasing sequence $(A_{n})_{n}$
of sets in $\mathcal{A}$.
\end{definition}

A simple way to construct nontrivial examples of submodular capacities is to
start with a probability measure $P:\mathcal{A\rightarrow}[0,1]$ and to
consider any nondecreasing concave function $u;[0,1]\rightarrow\lbrack0,1]$
such that $u(0)=0$ and $u(1)=1;$ for example, one may chose $u(t)=t^{a}$ with
$0<\alpha<1.$Then $\mu=u(P)$ is a submodular capacity on the $\sigma$-algebra
$\mathcal{A}$, called a \emph{distorted probability}.

Another particular class of capacities is given by the so-called measures of
possibilities (or maxitive set functions), defined as follows (see, e.g.,
\cite{Zadeh}, or \cite{Dubois}, Chapter 1, or \cite{Cooman}).

\begin{definition}
\label{def11} A set function $\mu:{\mathcal{A}}\rightarrow\mathbb{R}_{+}$ is
called possibility (or maxitive) measure, if it verifies the following axioms :

$(a)$ $\mu(\emptyset)=0$, $\mu(\Omega)=1$ and $\mu(\bigcup_{i\in I}A_{i}%
)=\sup\{\mu(A_{i}) ; i\in I\}$ for all $A_{i}\in\Omega$, and any $I$ family of indices.

$(b)$ A possibility distribution $($on $\Omega)$, is a function $\lambda
:\Omega\rightarrow\lbrack0,1]$, such that $\sup\{\lambda(s);s\in\Omega\}=1$.

According to, e.g., \cite{Dubois}, any possibility distribution $\lambda$ on
$\Omega$, induces a possibility measure $\mu$ given by the formula
$\mu_{\lambda}(A)=\sup\{\lambda(s) ; s\in A\}$, for all $A\subset\Omega$.
\end{definition}

Any possibility measure $\mu$ is monotone and submodular. Indeed, while the
monotonicity is immediate from the axiom $\mu(A\cup B)=\max\{\mu(A), \mu
(B)\}$, the submodularity is immediate from the property $\mu(A \cap B)
\le\min\{\mu(A), \mu(B)\}$.

The \emph{capacity spaces} (that is, the triplets $(\Omega,\mathcal{A},\mu),$
where $\Omega$ is a nonempty abstract set endowed with a $\sigma$-algebra
${\mathcal{A}}$ of subsets of $\Omega$ and $\mu:\mathcal{A}\rightarrow
\mathbb{R}_{+}$ is a capacity) represent a generalization of the classical
concept of probability space.

To a capacity space $(\Omega,\mathcal{A},\mu)$ one can attach several spaces
of functions, starting with the space $L^{0}(\Omega,\mathcal{A},\mu)$ of all
random variables $f:\Omega\rightarrow\mathbb{R}$ (that is, of all functions
$f$ verifying the condition of ${\mathcal{A}}$-measurability, $f^{-1}
(A)\in{\mathcal{A}}$ for every Borel subset $A\subset\mathbb{R}$). At the end
of this section, the analogs of the classical Lebesgue spaces $L^{p}%
(\Omega,\mathcal{A},\mu)$ (for $p\geq1)$ will be presented (under the
requirement that the capacity $\mu$ is submodular).

The key ingredient is the integrability of random variables $f\in L^{0}%
(\Omega,\mathcal{A},\mu)$ with respect to the capacity $\mu$.\qquad\qquad

\begin{definition}
\label{def2}The Choquet integral of a random variable $f:\Omega\rightarrow
\mathbb{R}$ on a set $A\in\mathcal{A}$ is defined by the formula
\begin{align}
(\operatorname*{C})\int_{A}f\mathrm{d}\mu &  =\int_{0}^{+\infty}\mu\left(
\{\omega\in\Omega:f(\omega)>t\}\cap A\right)  \mathrm{d}t\nonumber\\
&  +\int_{-\infty}^{0}\left[  \mu\left(  \{\omega\in\Omega:f(\omega)>t\}\cap
A\right)  -\mu(A)\right]  \mathrm{d}t, \label{Ch}%
\end{align}
where the integrals in the right hand side are generalized Riemann integrals.

If $(C)\int_{A}f\mathrm{d}\mu$ exists in $\mathbb{R}$, then $f$ is called
Choquet integrable on $A$.
\end{definition}

Notice that if $f\geq0$, then the last integral in the formula (\ref{Ch}) is 0.

The Choquet integral agrees with the Lebesgue integral in the case of
probabilistic measures. See Denneberg \cite{Denn}, p. 62.

The next remark summarizes the basic properties of the Choquet integral:

\begin{remark}
\label{rem1}$(a)$ If $f,g\in L^{0}(\Omega,\mathcal{A},\mu)$ are Choquet
integrable on $A$, then
\begin{gather*}
f\geq0\text{ implies }(\operatorname*{C})\int_{A}f\mathrm{d}\mu\geq0\text{
\quad\emph{(}positivity\emph{)}}\\
f\leq g\text{ implies }\left(  \operatorname*{C}\right)  \int_{A}%
f\mathrm{d}\mu\leq\left(  \operatorname*{C}\right)  \int_{A}g\mathrm{d}%
\mu\text{ \quad\emph{(}monotonicity\emph{)}}\\
\left(  \operatorname*{C}\right)  \int_{A}af\mathrm{d}\mu=a\cdot\left(
\operatorname*{C}\right)  \int_{A}f\mathrm{d}\mu\text{ for all }a\geq0\text{
\quad\emph{(}positive\emph{ }homogeneity\emph{)}}\\
\left(  \operatorname*{C}\right)  \int_{A}1\cdot\mathrm{d}\mu(t)=\mu
(A)\text{\quad\emph{(}calibration\emph{).}}%
\end{gather*}
$(b)$ In general, the Choquet integral is not additive but, if $f$ and $g$ are
comonotonic \emph{(}that is, $(f(\omega)-f(\omega^{\prime}))\cdot
(g(\omega)-g(\omega^{\prime}))\geq0$, for all $\omega,\omega^{\prime}\in
A$\emph{), }then
\[
\left(  \operatorname*{C}\right)  \int_{A}(f+g)\mathrm{d}\mu=\left(
\operatorname*{C}\right)  \int_{A}f\mathrm{d}\mu+\left(  \operatorname*{C}%
\right)  \int_{A}g\mathrm{d}\mu.
\]
An immediate consequence is the property of translation invariance,
\[
\left(  \operatorname*{C}\right)  \int_{A}(f+c)\mathrm{d}\mu=\left(
\operatorname*{C}\right)  \int_{A}f\mathrm{d}\mu+c\cdot\mu(A)
\]
for all $c\in\mathbb{R}$ and $f$ integrable on $A.$

($c)$ If $\mu$ is a subadditive capacity and $f$ is nonnegative and Choquet
integrable on the sets $A$ and $B$, then
\[
(\operatorname*{C})\int_{A\cup B}f\mathrm{d}\mu\leq(\operatorname*{C})\int
_{A}f\mathrm{d}\mu+(\operatorname*{C})\int_{B}f\mathrm{d}\mu.
\]
For $(a)$ and $(b)$ see Denneberg\emph{ \cite{Denn}, }Proposition\emph{ }%
$5.1$\emph{, }p\emph{. }$64;$\emph{ the assertion }$(c)$\emph{ follows in a
straightforward way from the definition of the Choquet integral.}
\end{remark}

\begin{remark}
\label{rem4}\emph{(The Subadditivity Theorem) }If $\mu$ is a submodular
capacity, then\ the associated Choquet integral is subadditive, that is,%
\[
\left(  \operatorname*{C}\right)  \int_{A}(f+g)\mathrm{d}\mu\leq\left(
\operatorname*{C}\right)  \int_{A}f\mathrm{d}\mu+\left(  \operatorname*{C}%
\right)  \int_{A}g\mathrm{d}\mu
\]
for all functions $f$ and $g$ integrable on $A.$ See\emph{ \cite{Denn},
}Theorem\emph{ }$6.3$\emph{, }p\emph{. }$75$\emph{. }In addition, the
following two integral analogs of the modulus inequality hold true:
\[
|(\operatorname*{C})\int_{A}f\mathrm{d}\mu|\leq(\operatorname*{C})\int
_{A}|f|\mathrm{d}\mu
\]
and
\[
|(\operatorname*{C})\int_{A}f\mathrm{d}\mu-(\operatorname*{C})\int
_{A}g\mathrm{d}\mu|\leq(\operatorname*{C})\int_{A}|f-g|\mathrm{d}\mu;
\]
the last assertion is covered by Corollary $6.6$, p. $82$, in
\emph{\cite{Denn}.}
\end{remark}

The analogs of the Lebesgue spaces in the context of capacities can be
introduced for $1\leq p<+\infty$ via the formulas
\[
{\mathcal{L}}^{p}(\Omega,\mathcal{A},\mu)=\{f:f\in L^{0}(\Omega,\mathcal{A}%
,\mu)\text{ and }(C)\int_{\Omega}|f(\omega)|^{p}\mathrm{d}\mu<+\infty\}.
\]
When $\mu$ is a subadditive capacity (in particular, when $\mu$ is
submodular), the functionals $\Vert\cdot\Vert_{{\mathcal{L}}^{p}%
(\Omega,\mathcal{A},\mu)}$ defined by the formula
\[
\Vert f\Vert_{{\mathcal{L}}^{p}(\Omega,\mathcal{A},\mu)}=\left(
(C)\int_{\Omega}|f(\omega)|^{p}\mathrm{d}\mu\right)  ^{1/p}%
\]
satisfy the triangle inequality too (see, e.g., \cite{Cerda}, Theorem 2, p. 5,
or \cite{Denn}, Proposition 9.4, pp. 109-110).

Under the stronger hypothesis that $\mu$ is a submodular capacity, the
quotient space
\[
L^{p}(\Omega,\mathcal{A},\mu)={\mathcal{L}}^{p}(\Omega,\mathcal{A}%
,\mu)/{\mathcal{N}}_{p},
\]
where
\[
{\mathcal{N}}_{p}=\{f\in{\mathcal{L}}^{p}(\Omega,\mathcal{A},\mu):\left(
(C)\int_{\Omega}|f(\omega)|^{p}\mathrm{d}\mu\right)  ^{1/p}=0\},
\]
becomes a normed vector space relative to the norm
\[
\Vert f\Vert_{L^{p}(\Omega,\mathcal{A},\mu)}=\left(  (C)\int_{\Omega}%
|f(\omega)|^{p}\mathrm{d}\mu\right)  ^{1/p}.
\]
See \cite{Denn}, Proposition 9.4, p. 109, for $p=1$ and ibidem p. 115 for
arbitrary $p\geq1$.

$L^{p}(\Omega,\mathcal{A},\mu)$ is a Banach space when $\mu$ is not only
submodular, but also lower continuous in the sense that
\[
\lim_{n\rightarrow\infty}\mu(A_{n})=\mu(\cup_{n=1}^{\infty}A_{n})
\]
for every nondecreasing sequence $(A_{n})_{n}$ of sets in $\mathcal{A}$. See
\cite{Denn}, Proposition 9.5, p. 111 and the comment at page 115. Under the
same hypotheses, $h\in{\mathcal{N}}_{p}$ if and only if
\[
h=0\text{\quad}\mu\text{-}a.e.,
\]
meaning the existence of a set $N\subset\Omega$ such that
\[
\mu^{\ast}(N)=\inf\left\{  \mu(A):A\in\mathcal{A},\text{ }A\supset N\text{
}\right\}  =0
\]
and $h(\omega)=0$ for all $\omega\in\Omega\setminus N.$ See \cite{Denn}, p.
107, Corollary 9.2, and the comments at pp. 107-108.

\section{Continuity of Random functions associated to a capacity space}

Given a capacity space $(\Omega,\mathcal{A},\mu)$ and a subset $D$ of the
Euclidean space $\mathbb{R}^{N},$ we will refer to the functions
$F:D\rightarrow L^{0}(\Omega,\mathcal{A},\mu)$ as \emph{random functions}. It
is also usual to interpret $F$ as a \emph{stochastic process} $F:D\times
\Omega\rightarrow\mathbb{R},$ $F(\mathbf{x},\omega)=F(\mathbf{x})(\omega).$
For fixed $\omega$, $F(\mathbf{x},\omega)$ is a deterministic function of
$\mathbf{x}$, called a \emph{sample function.}

Following the case of probabilistic spaces one can consider several kinds of
continuity, of interest for us being the following ones.

\begin{enumerate}
\item[-] A random function $F$ is \emph{continuous in capacity} at the point
$\mathbf{x}_{0}\in D$, if $\mathbf{x}\rightarrow\mathbf{x}_{0}$ implies
$F(\mathbf{x})\rightarrow F(\mathbf{x}_{0})$ in capacity, that is, for every
$\varepsilon>0$ and $\eta>0$ there exists $\delta=\delta(\varepsilon
,\eta,\mathbf{x}_{0})>0$ such that
\[
\mu(\{\omega\in\Omega:|F(\mathbf{x},\omega)-F(\mathbf{x}_{0},\omega
)|\geq\varepsilon\})<\eta
\]
whenever $\mathbf{x}\in E$ and $\Vert\mathbf{x}-\mathbf{x}_{0}\Vert<\delta.$

\item[-] A random function $F$ is called \emph{uniformly continuous in
capacity}, if for every $\varepsilon>0$ and $\eta>0$ there exists
$\delta=\delta(\varepsilon,\eta)>0$, such that%
\[
\mu(\{\omega\in\Omega:|F(\mathbf{x}^{\prime},\omega)-F(\mathbf{x}%
^{\prime\prime},\omega)|\geq\varepsilon\})<\eta
\]
whenever $\mathbf{x}^{\prime},\mathbf{x}^{\prime\prime}\in D$ and
$\Vert\mathbf{x}^{\prime}-\mathbf{x}^{\prime\prime}\Vert<\delta.$
\end{enumerate}

When $F$ takes values in a space $L^{p}(\Omega,\mathcal{A},\mu)$ (for some
$p\in\lbrack1,\infty)),$ then one can speak of its continuity in the
Choquet-mean of order $p.$

\begin{enumerate}
\item[-] A random function $F$ is called \emph{continuous in the Choquet-mean
of order} $p$ at the point $\mathbf{x}_{0}\in D$, if for every $\varepsilon>0$
there exists $\delta=\delta(\varepsilon,\mathbf{x}_{0})>0$, such that for all
$\mathbf{x}\in E$ with $\Vert\mathbf{x}-\mathbf{x}_{0}\Vert<\delta$, we have
\[
(C)\int_{\Omega}|F(\mathbf{x},\omega)-F(\mathbf{x}_{0},\omega)|^{p}%
\mathrm{d}\mu<\varepsilon.
\]

\item[-] A random function $F$ is called \emph{uniformly continuous in
Choquet-mean of order} $p$ if for every $\varepsilon>0$ there exists
$\delta(\varepsilon)>0$, such that for all $\mathbf{x}^{\prime},\mathbf{x}%
^{\prime\prime}\in E$ with $\Vert\mathbf{x}^{\prime}-\mathbf{x}^{\prime\prime
}\Vert<\delta(\varepsilon)$ we have
\[
(C)\int_{\Omega}|F(\mathbf{x}^{\prime},\omega)-F(\mathbf{x}^{\prime\prime
},\omega)|^{p}\mathrm{d}\mu<\varepsilon.
\]
In the next section we will be interested in the approximation of random
functions by random Bernstein polynomials. The notions of approximation in
capacity and approximation in Choquet-mean are defined as follows:

\item[-] A sequence $(F_{n})_{n}$ of random functions \emph{converges in
capacity} to the random function $F$ at $\mathbf{x}\in D$, if for every
$\varepsilon,\eta>0$, there exists $N(\varepsilon,\eta,\mathbf{x}%
)\in\mathbb{N}$ such that for all $n\geq N(\varepsilon,\eta,\mathbf{x})$ we
have
\[
\mu(\{\omega\in\Omega:|F_{n}(\mathbf{x},\omega)-F(\mathbf{x},\omega
)|\geq\varepsilon\})<\eta.
\]
If $N(\varepsilon,\eta,\mathbf{x})$ does not depend on $\mathbf{x}$, then we
say that $(F_{n})_{n}$ \emph{converges} \emph{uniformly in capacity} to $F$.

\item[-] A sequence $(F_{n})_{n}$ of random functions \emph{converges in
Choquet-mean of order} $p$ to the random function $F$ if for every
$\varepsilon>0$ and $\mathbf{x}\in D$, there exists $N(\varepsilon
,\mathbf{x})\in\mathbb{N}$ such that for all $n\geq N(\varepsilon,\mathbf{x})$
we have
\[
(C)\int_{\Omega}|F_{n}(\mathbf{x},\omega)-F(\mathbf{x},\omega)|^{p}%
\mathrm{d}\mu<\varepsilon.
\]
If $N(\varepsilon,\mathbf{x})$ does not depend on $\mathbf{x}$, then we will
say that $(F_{n})_{n}$ \emph{converges uniformly} \emph{to} $F$ \emph{in the
Choquet-mean of order} $p.$

\item[-] For $1\leq p<+\infty$ and $\delta_{i}\geq0$, $i=1,...,N$, the
multivariate \emph{Choquet $L^{p}$-modulus of continuity} of $F$ will be
defined by
\[
\Gamma(f:\delta_{1},...,\delta_{N})_{p}%
\]%
\[
=\left(  \sup_{|t_{i}-s_{i}|\leq\delta_{i},i=1,...N}(C)\int_{\Omega}%
|F(t_{1},...,t_{N},\omega)-F(s_{1},...,s_{N},\omega)|^{p}d\mu(\omega)\right)
^{1/p}.
\]

\item[-] A sequence $(F_{n})_{n}$ of random functions \emph{converges in
distribution with respect to the capacity} $\mu$, to the random function $F$
at $t_{0}\in D$, if
\[
\lim_{n\to\infty}D_{F_{n}}(t_{0}, x)=D_{F}(t_{0}, x),
\]
at each point $x\in\mathbb{R}$ where its distribution function $D_{F}(t_{0},
x)=\mu(\{\omega\in\Omega: F(t_{0}, \omega)\leq x\})$, is continuous as
function of $x$.

If the limit takes place uniformly with respect to $t_{0}\in D$, then we say
that the sequence of random functions $(F_{n})_{n}$ \emph{converges}
\emph{uniformly in distribution with respect to the capacity} $\mu$, to $F$.
\end{enumerate}

\begin{remark}
\label{distrib} A big source of convergence in distribution with respect to a
capacity is provided by convergence in capacity. Indeed, let $\mu$ be a
subadditive capacity. Since by Proposition 8.5, p. 98 in \cite{Denn}, if
$(f_{n})_{n}$ is a sequence of random variables converging in capacity to $f$,
then $(f_{n})_{n}$ converges in distribution to $f$ with respect to the same
capacity, it easily follows that if a sequence $(F_{n})_{n}$ of random
functions \emph{converges in capacity} to the random function $F$ at $t_{0}\in
D$, then it \emph{converges in distribution with respect to capacity}, to the
random function $F$ at $t_{0}\in D$.
\end{remark}

An important property of the Choquet $L^{p}$-modulus of continuity used in
approximation is the following one, stated and proved here only for simplicity
for two variables.

\begin{theorem}
\label{thm0}Let $1\leq p<+\infty$ and $N\in\mathbb{N}$. If $\mu$ is a
submodular capacity, then
\[
\Gamma(F:\alpha_{1}\cdot\gamma_{1},...,\alpha_{N}\cdot\gamma_{N})_{p}%
\leq(1+\alpha_{1}+...+\alpha_{N})\Gamma(F:\gamma_{1},...,\gamma_{N})_{p},
\]
for all $\alpha_{i},\gamma_{i}\geq0$, $i=1,...,N$.
\end{theorem}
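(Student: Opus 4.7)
The plan is to reduce the multivariate inequality to a single chain of intermediate points along the diagonal segment from $\mathbf{s}$ to $\mathbf{t}$ and invoke the Minkowski-type inequality for the Choquet $L^p$-norm (which holds because $\mu$ is submodular, per the discussion following Remark \ref{rem4}).

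First, I fix an admissible pair: pick $\mathbf{s}=(s_1,\dots,s_N)$ and $\mathbf{t}=(t_1,\dots,t_N)$ with $|t_i-s_i|\leq \alpha_i\gamma_i$ for every $i$. If all the $\alpha_i$ vanish (or, more generally, if $\mathbf{s}=\mathbf{t}$) the left-hand side is $0$ and there is nothing to prove, so I assume $K:=\lceil \max_i \alpha_i \rceil \geq 1$. I then introduce the $K+1$ equally spaced points on the segment $[\mathbf{s},\mathbf{t}]$,
\[
\mathbf{u}_k = \mathbf{s}+\frac{k}{K}(\mathbf{t}-\mathbf{s}),\qquad k=0,1,\dots,K,
\]
so that $\mathbf{u}_0=\mathbf{s}$, $\mathbf{u}_K=\mathbf{t}$, and consecutive points satisfy
\[
\bigl|(\mathbf{u}_{k+1}-\mathbf{u}_k)_i\bigr|=\frac{|t_i-s_i|}{K}\leq\frac{\alpha_i\gamma_i}{K}\leq\gamma_i,\qquad i=1,\dots,N,
\]
because $\alpha_i\leq K$ by the choice of $K$. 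Hence each increment $F(\mathbf{u}_{k+1})-F(\mathbf{u}_k)$ is of the form controlled by $\Gamma(F:\gamma_1,\dots,\gamma_N)_p$.

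Next I write the telescoping identity $F(\mathbf{t})-F(\mathbf{s})=\sum_{k=0}^{K-1}\bigl[F(\mathbf{u}_{k+1})-F(\mathbf{u}_k)\bigr]$ and apply the triangle inequality for the Choquet $L^p$ norm, legitimate here thanks to submodularity of $\mu$. This yields
\[
\left((C)\int_{\Omega}|F(\mathbf{t},\omega)-F(\mathbf{s},\omega)|^p \mathrm{d}\mu\right)^{1/p}\leq \sum_{k=0}^{K-1}\left((C)\int_{\Omega}|F(\mathbf{u}_{k+1},\omega)-F(\mathbf{u}_k,\omega)|^p \mathrm{d}\mu\right)^{1/p}\leq K\,\Gamma(F:\gamma_1,\dots,\gamma_N)_p.
\]
Finally I use the elementary bound $K=\lceil\max_i\alpha_i\rceil\leq \max_i\alpha_i+1\leq 1+\alpha_1+\cdots+\alpha_N$ to conclude, and take the supremum over admissible pairs $(\mathbf{s},\mathbf{t})$ to obtain the stated estimate.

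The only delicate point is recognizing that a coordinate-by-coordinate path (changing one variable at a time) would give the weaker constant $N+\sum\alpha_i$; the tighter constant $1+\sum\alpha_i$ comes precisely from using the diagonal segment, which increments all coordinates simultaneously and therefore requires only $\lceil\max_i\alpha_i\rceil$ steps instead of $\sum_i\lceil\alpha_i\rceil$. Once this observation is made, the rest is a direct application of the Minkowski inequality for $L^p(\Omega,\mathcal{A},\mu)$ recorded in the preliminaries.
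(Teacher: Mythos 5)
Your proof is correct and follows essentially the same route as the paper: both arguments insert a chain of intermediate points between $\mathbf{s}$ and $\mathbf{t}$, apply the Minkowski inequality for the Choquet $L^p$-norm (valid by submodularity), and bound the number of steps by $\lceil\max_i\alpha_i\rceil\leq 1+\alpha_1+\cdots+\alpha_N$. The paper merely packages the single telescoping step as the subadditivity property $\Gamma(F:\delta_1+\delta_2,\eta_1+\eta_2)_p\leq\Gamma(F:\delta_1,\eta_1)_p+\Gamma(F:\delta_2,\eta_2)_p$ and then iterates, which is the same mechanism as your diagonal chain.
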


\begin{proof}
For simplicity, we give the proof only for $N=2$, but the proof in the general
case for $N$ is similar. We start with the inequality
\begin{equation}
\Gamma(F:\delta_{1}+\delta_{2},\eta_{1}+\eta_{2})_{p}\leq\Gamma(F:\delta
_{1},\eta_{1})_{p}+\Gamma(F:\delta_{2},\eta_{2})_{p}. \label{aditiv}%
\end{equation}
Indeed, let $t_{1},r_{1},s_{1}$ with $|t_{1}-s_{1}|\leq\delta_{1}+\delta_{2}$,
$|t_{1}-r_{1}|\leq\delta_{1}$, $|r_{1}-s_{1}|\leq\delta_{2}$ and $t_{2}%
,r_{2},s_{2}$ with $|t_{2}-s_{2}|\leq\eta_{1}+\eta_{2}$, $|t_{2}-r_{2}%
|\leq\eta_{1}$, $|r_{2}-s_{2}|\leq\eta_{2}$.

Since $\mu$ is submodular, the Minkowski inequality is available in the vector
space ${\mathcal{L}}(\Omega,{\mathcal{A}},\mu)$; see Theorem 2, p. 5 in
\cite{Cerda}, or Proposition 9.4, p. 109-110 in \cite{Denn}. Therefore
\begin{multline*}
\left(  (C)\int_{\Omega}|F(t_{1},t_{2},\omega)-F(s_{1},s_{2},\omega)|^{p}%
d\mu(\omega)\right)  ^{1/p}\\
\leq\left(  (C)\int_{\Omega}|F(t_{1},t_{2},\omega)-F(r_{1},r_{2},\omega
)|^{p}d\mu(\omega)\right)  ^{1/p}\\
+\left(  (C)\int_{\Omega}|F(r_{1},r_{2},\omega)-F(s_{1},s_{2},\omega)|^{p}%
d\mu(\omega)\right)  ^{1/p}.
\end{multline*}
Passing now to the corresponding suprema, first in the right-hand side and
then in the left-hand side, we are led to (\ref{aditiv}). As a consequence
\[
\Gamma(F:n\delta,m\eta)_{p}\leq\max\{n,m\}\Gamma(F:\delta,\eta)_{p}%
\]
and taking into account that $\alpha<[\alpha]+1,~\beta<[\beta]+1$ and
\[
\max\{[\alpha]+1,[\beta]+1\}=\max\{[\alpha],[\beta]\}+1<\alpha+\beta+1,
\]
one easily obtain the inequality in the statement of Theorem \ref{thm0}.
\end{proof}

\section{Approximation via random Bernstein polynomials}

The approximation of random functions defined on a compact $N$-dimensional
interval in $\mathbb{R}^{N}$ (that is, on a product of $N$ compact intervals
of $\mathbb{R}$) can be easily reduced (via an affine transformation) to the
particular case where the domain is the $N$-dimensional unit cube $[0,1]^{N}.$
In this context it is important to study the approximation of random functions
$F:[0,1]^{N}\rightarrow L^{0}(\Omega,\mathcal{A},\mu)$ via the associated
random Bernstein polynomials,%
\begin{multline*}
B_{n_{1},...,n_{N}}(F)(x_{1},...,x_{N},\omega)\\
=\sum_{k_{1}=0}^{n_{1}}...\sum_{k_{N}=0}^{n_{N}}p_{k_{1},n_{1}}(x_{1})\cdots
p_{k_{N},n_{N}}(x_{N})\cdot F\left(  \frac{k_{1}}{n_{1}},...,\frac{k_{N}%
}{n_{N}},\omega\right)  ,
\end{multline*}
where $p_{k_{j},n_{j}}(x_{j})={\binom{n_{j}}{k_{j}}}x_{j}^{k_{j}}%
(1-x_{j})^{n_{j}-k_{j}}$, $k_{j}\in\{0,...,n_{j}\},$ $n_{j}\in\mathbb{N}$ and
$x_{j}\in\lbrack0,1]$ for $j=1,...N$.

Recall that the classical Bernstein polynomials attached to a function
$f:[0,1]\rightarrow\mathbb{R}$ are defined by the formula
\[
B_{n}(f)(x)=\sum_{k=0}^{n}f\left(  \frac{k}{n}\right)  p_{k,n}(x),\text{\quad
}x\in\lbrack0,1],~n\in\mathbb{N},
\]
and their main feature is the estimate
\begin{equation}
\sup_{x}\Vert B_{n}(f)(x)-f(x)\Vert\leq c\cdot\omega_{1}\left(  f;\frac
{1}{\sqrt{n}}\right)  , \label{Sik}%
\end{equation}
where $c=\frac{4306+837\sqrt{6}}{5832}=1,089....$ is the optimal Sikkema
constant and
\[
\omega_{1}(f;\delta)=\sup\{|f(x)-f(y)|:x,y\in\lbrack0,1],|x-y|\leq\delta\}
\]
is the usual modulus of continuity. See \cite{Sikkema}.

These polynomials also have a number of nice properties related to shape
preservation, that make them useful to computer aided geometric design.
Details are available in the book of Gal \cite{Gal2008}.

The approximation of random functions by random Bernstein polynomials will be
discussed in the context of \emph{submodular capacity spaces} $(\Omega
,\mathcal{A},\mu),$ that is, when the capacity $\mu$ under attention is
submodular. We start with the case of approximation in the Choquet-mean of
order $p\in\lbrack1,\infty)$.

\begin{theorem}
\label{thm1} Suppose that $(\Omega,\mathcal{A},\mu)$ is a submodular capacity
space and
\[
F:[0,1]^{N}\rightarrow L^{p}(\Omega,\mathcal{A},\mu)
\]
is a random function. Then for all $x_{1},x_{2},...,x_{N}\in\lbrack0,1]$ and
$n_{1},n_{2},...,n_{N}\in\mathbb{N}$, the following quantitative estimate
holds
\begin{multline*}
\left[  (C)\int_{\Omega}|F(x_{1},x_{2},...x_{N},\omega)-B_{n_{1},n_{2}%
}(F)(x_{1},x_{2},...,x_{N},\omega)|^{p}\mathrm{d}\mu\right]  ^{1/p}\\
\leq\lbrack C_{p}]^{1/p}\cdot\Gamma\left(  F;\frac{1}{\sqrt{n_{1}}},\frac
{1}{\sqrt{n_{2}}},...,\frac{1}{\sqrt{n_{N}}}\right)  _{p},
\end{multline*}
where $C_{p}$ is independent of $n_{1}$, $n_{2},...,n_{N}$ and $x_{1}%
,x_{2},...,x_{N}$.

If $F$ is continuous in the Choquet-mean of order $p$ at each $\mathbf{x}%
\in\lbrack0,1]^{N}$, then the sequence of random Bernstein polynomials
$(B_{n_{1},...,n_{N}}(F))_{n_{1},...,n_{N}}$ converges uniformly to $F$ in the
Choquet-mean of order $p$ as $\min\left\{  n_{1},...,n_{N}\right\}
\rightarrow\infty.$
\end{theorem}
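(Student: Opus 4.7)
The plan is to mirror the classical Bernstein error argument, replacing additivity of the integral by the subadditivity available in the submodular setting (Remark \ref{rem4}) and using the partition-of-unity property $\sum_{\mathbf{k}} p_{\mathbf{k},\mathbf{n}}(\mathbf{x})=1$ to apply Jensen's inequality pointwise in $\omega$. First, I would write
$$F(\mathbf{x},\omega)-B_{\mathbf{n}}(F)(\mathbf{x},\omega)=\sum_{\mathbf{k}} p_{\mathbf{k},\mathbf{n}}(\mathbf{x})\bigl[F(\mathbf{x},\omega)-F(\mathbf{k}/\mathbf{n},\omega)\bigr],$$
and, since $t\mapsto|t|^{p}$ is convex and the Bernstein weights are nonnegative summing to $1$, deduce the pointwise bound
$$|F-B_{\mathbf{n}}(F)|^{p}\le\sum_{\mathbf{k}} p_{\mathbf{k},\mathbf{n}}(\mathbf{x})\,|F(\mathbf{x},\cdot)-F(\mathbf{k}/\mathbf{n},\cdot)|^{p}.$$
Applying monotonicity, positive homogeneity for nonnegative scalars, and repeated finite subadditivity of the Choquet integral (Remark \ref{rem1}, Remark \ref{rem4}) then yields
$$(C)\int_{\Omega}|F-B_{\mathbf{n}}(F)|^{p}\mathrm{d}\mu\le\sum_{\mathbf{k}} p_{\mathbf{k},\mathbf{n}}(\mathbf{x})\,(C)\int_{\Omega}|F(\mathbf{x},\cdot)-F(\mathbf{k}/\mathbf{n},\cdot)|^{p}\mathrm{d}\mu.$$

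Next, by definition of the multivariate Choquet modulus, each inner integral is dominated by $\bigl[\Gamma(F;|x_{1}-k_{1}/n_{1}|,\ldots,|x_{N}-k_{N}/n_{N}|)_{p}\bigr]^{p}$. I would then apply Theorem \ref{thm0} with $\alpha_{i}=\sqrt{n_{i}}\,|x_{i}-k_{i}/n_{i}|$ and $\gamma_{i}=1/\sqrt{n_{i}}$, and raise the resulting inequality to the $p$-th power. Using the elementary inequality $(1+a_{1}+\cdots+a_{N})^{p}\le(N+1)^{p-1}(1+\sum_{i}a_{i}^{p})$ and the tensor-product factorisation of $p_{\mathbf{k},\mathbf{n}}$ (together with $\sum_{k_{i}}p_{k_{i},n_{i}}(x_{i})=1$), the $\mathbf{k}$-sum decouples into
$$(N+1)^{p-1}\Bigl(1+\sum_{i=1}^{N}n_{i}^{p/2}\sum_{k_{i}=0}^{n_{i}}p_{k_{i},n_{i}}(x_{i})\,|x_{i}-k_{i}/n_{i}|^{p}\Bigr)\cdot\bigl[\Gamma(F;1/\sqrt{n_{1}},\ldots,1/\sqrt{n_{N}})_{p}\bigr]^{p}.$$

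The step requiring the most care is the uniform $p$-th absolute-moment bound $\sum_{k=0}^{n}p_{k,n}(x)|x-k/n|^{p}\le M_{p}n^{-p/2}$ for all $x\in[0,1]$, $n\in\mathbb{N}$. For $1\le p\le 2$ this follows from H\"older's inequality together with the classical variance identity $\sum p_{k,n}(x)(x-k/n)^{2}=x(1-x)/n\le 1/(4n)$; for $p>2$ it follows by interpolating with an even-order moment $\sum p_{k,n}(x)(x-k/n)^{2m}=O(n^{-m})$, a standard estimate for the binomial. Plugging this bound in, each term $n_{i}^{p/2}\cdot M_{p}n_{i}^{-p/2}=M_{p}$ is controlled, and the total $\mathbf{k}$-sum is dominated by a constant $C_{p}=(N+1)^{p-1}(1+NM_{p})$ independent of $\mathbf{n}$ and $\mathbf{x}$. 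Taking $p$-th roots delivers the quantitative estimate.

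For the second conclusion, pointwise continuity in the Choquet-mean of order $p$ means that the map $\mathbf{x}\mapsto F(\mathbf{x},\cdot)$ is continuous from the compact cube $[0,1]^{N}$ into the seminormed space $L^{p}(\Omega,\mathcal{A},\mu)$ (the triangle inequality there is available by submodularity); compactness of the domain upgrades this to uniform continuity, hence $\Gamma(F;1/\sqrt{n_{1}},\ldots,1/\sqrt{n_{N}})_{p}\to 0$ as $\min_{i}n_{i}\to\infty$, with uniformity in $\mathbf{x}$ inherited from the $\mathbf{x}$-independence of $C_{p}$. The main obstacle I anticipate is the absolute-moment estimate for general $p\ge 1$, together with the careful use of finite subadditivity (rather than additivity) of the Choquet integral at the first step.
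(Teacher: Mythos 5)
Your proposal is correct and follows essentially the same route as the paper's proof: Jensen's inequality via the partition of unity, subadditivity of the Choquet integral for submodular capacities, Theorem \ref{thm0} with $\alpha_i=\sqrt{n_i}\,|x_i-k_i/n_i|$ and $\gamma_i=1/\sqrt{n_i}$, and a uniform bound on the resulting Bernstein moment sum. The only (harmless) deviation is in extracting the constant $C_p$: you decouple the sum with the power-mean inequality and bound each $p$-th absolute moment separately, whereas the paper invokes the moment estimates of Adell, Bustamante and Quesada directly on $\bigl(1+\sum_i\sqrt{n_i}\,|x_i-k_i/n_i|\bigr)^{p}$; your variant is arguably cleaner for non-integer $p$.
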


\begin{proof}
For simplicity, we will give all the details of the proof in the case $N=2$
(the general case being similar). Taking into account the identity
\[
\sum_{k_{1}=0}^{n_{1}}\sum_{k_{2}=0}^{n_{2}}p_{k_{1},n_{1}}(x_{1})\cdot
p_{k_{2},n_{2}}(x_{2})=1
\]
and the convexity of the function $x^{p}$ for $p\geq1$, we infer from Jensen's
inequality that
\begin{multline*}
|F(x_{1},x_{2},\omega)-B_{n_{1},n_{2}}(F)(x_{1},x_{2},\omega)|^{p}\\
\leq\left[  \sum_{k_{1}=0}^{n_{1}}\sum_{k_{2}=0}^{n_{2}}p_{k_{1},n_{1}}%
(x_{1})\cdot p_{k_{2},n_{2}}(x_{2})|F(x_{1},x_{2},\omega)-F(k_{1}/n_{1}%
,k_{2}/n_{2},\omega)|\right]  ^{p}\\
\leq\sum_{k_{1}=0}^{n_{1}}\sum_{k_{2}=0}^{n_{2}}p_{k_{1},n_{1}}(x_{1})\cdot
p_{k_{2},n_{2}}(x_{2})|F(x_{1},x_{2},\omega)-F(k_{1}/n_{1},k_{2}/n_{2}%
,\omega)|^{p}.
\end{multline*}
Integrating side by side and using Remark \ref{rem1}, (a) and (c) we arrive at
the estimate
\[
(C)\int_{\Omega}|F(x_{1},x_{2},,\omega)-B_{n_{1},n_{2}}(F)(x_{1},x_{2}%
,\omega)|^{p}\mathrm{d}\mu
\]%
\begin{equation}
\leq\sum_{k_{1}=0}^{n_{1}}\sum_{k_{2}=0}^{n_{2}}p_{k_{1},n_{1}}(x_{1}%
)p_{k_{2},n_{2}}(x_{2})(C)\int_{\Omega}|F(x_{1},x_{2},\omega)-F(k_{1}%
/n_{1},k_{2}/n_{2},\omega)|^{p}\mathrm{d}\mu. \label{elp}%
\end{equation}
Using the inequality (\ref{elp}) and then Theorem \ref{thm0}, we get
\begin{multline*}
(C)\int_{\Omega}|F(x_{1},x_{2},,\omega)-B_{n_{1},n_{2}}(F)(x_{1},x_{2}%
,\omega)|^{p}\mathrm{d}\mu\\
\leq\sum_{k_{1}=0}^{n_{1}}\sum_{k_{2}=0}^{n_{2}}p_{k_{1},n_{1}}(x_{1}%
)p_{k_{2},n_{2}}(x_{2})\\
\cdot\left[  \Gamma\left(  F;\frac{1}{\sqrt{n_{1}}}\cdot(\sqrt{n_{1}}%
|x_{1}-k_{1}/n_{1}|),\frac{1}{\sqrt{n_{2}}}\cdot(\sqrt{n_{2}}|x_{2}%
-k_{2}/n_{2}|)\right)  _{p}\right]  ^{p}\\
\leq\left[  \Gamma\left(  F;\frac{1}{\sqrt{n_{1}}},\frac{1}{\sqrt{n_{2}}%
}\right)  _{p}\right]  ^{p}\\
\cdot\sum_{k_{1}=0}^{n_{1}}\sum_{k_{2}=0}^{n_{2}}p_{k_{1},n_{1}}%
(x_{1})p_{k_{2},n_{2}}(x_{2})(1+\sqrt{n_{1}}|x_{1}-k_{1}/n_{1}|+\sqrt{n_{2}%
}|x_{2}-k_{2}/n_{2}|)^{p}.
\end{multline*}
But by the general estimate of the moments of Bernstein polynomials
\[
\sum_{k=0}^{n}p_{k,n}(x)[\sqrt{n}|x-k/n|]^{j}\leq2G(1+j/2),j=0,1,...,p,
\]
where with $G$ we have denoted the Gamma function \emph{(}see Theorem \emph{1}
in J. A. Adell, J. Bustamente and J. M. Quesada \emph{\cite{Adell})}, it is
immediate that
\[
\sum_{k_{1}=0}^{n_{1}}\sum_{k_{2}=0}^{n_{2}}p_{k_{1},n_{1}}(x_{1}%
)p_{k_{2},n_{2}}(x_{2})(1+\sqrt{n_{1}}|x_{1}-k_{1}/n_{1}|+\sqrt{n_{2}}%
|x_{2}-k_{2}/n_{2}|)^{p}\leq C_{p},
\]
where $C_{p}$ is independent of $n_{1}$, $n_{2}$ and $x_{1},x_{2}\in
\lbrack0,1]$. Concluding, we obtain
\begin{multline*}
\left[  (C)\int_{\Omega}|F(x_{1},x_{2},,\omega)-B_{n_{1},n_{2}}(F)(x_{1}%
,x_{2},\omega)|^{p}\mathrm{d}\mu\right]  ^{1/p}\\
\leq\lbrack C_{p}]^{1/p}\cdot\Gamma\left(  F;\frac{1}{\sqrt{n_{1}}},\frac
{1}{\sqrt{n_{2}}}\right)  _{p}.
\end{multline*}
On the other hand, we observe that the continuity of $F$ in the Choquet-mean
of order $p$ at each $x$ in the compact $[0,1]^{N}$, easily implies its
uniform continuity on $[0,1]^{N}$, which by the definition of the multivariate
\emph{Choquet $L^{p}$-modulus of continuity} of $F$, immediately implies that
$\lim_{\delta_{1},...,\delta_{N}\rightarrow0}\Gamma(F;\delta_{1}%
,...,\delta_{N})_{p}=0$. This implies the second part of the theorem too.
\end{proof}

\begin{remark}
The particular case of Theorem $\ref{thm1}$, when $\mu$ is a $\sigma$-additive
measure, $N=1$ and $p=2$, was previously proved by Ignatov, Mills and Tzankova
\cite{Ignatov} and Kamolov \cite{Kamolov}. Also, the second part of Theorem
$\ref{thm1}$, for $\mu$ a $\sigma$-additive measure, $N=1$ and $p=1$ was first
noticed by Cenu\c{s}\u{a} and S\u{a}cuiu \cite{Cen-2}.
\end{remark}

The next result deals with the approximation in capacity.

\begin{theorem}
\label{thm2} Suppose that $(\Omega,\mathcal{A},\mu)$ is a submodular capacity
space and
\[
F:[0,1]^{N}\rightarrow L^{0}(\Omega,\mathcal{A},\mu)
\]
is a random function which is continuous in capacity at each point
$\mathbf{x}\in\lbrack0,1]^{N}$ and verifies the boundedness condition%
\[
M=\sup_{\mathbf{x}\in\lbrack0,1]^{N}}|F(\mathbf{x},\omega)|<+\infty
\]
for all $\omega\in\Omega,$ except possibly for a set $E$ of capacity zero.
Then the sequence of random Bernstein polynomials $(B_{n_{1},...,n_{N}%
}(F))_{n_{1},...,n_{N}}$ converges uniformly to $F$ in capacity as
$\min\left\{  n_{1},...,n_{N}\right\}  \rightarrow\infty.$
\end{theorem}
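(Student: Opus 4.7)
The plan is to reduce to Theorem~\ref{thm1} at $p=1$ and then pass from convergence in the Choquet-mean of order one to convergence in capacity via a Markov-type inequality for the Choquet integral. The hypothesis of Theorem~\ref{thm1} that $F(\mathbf{x})\in L^{1}(\Omega,\mathcal{A},\mu)$ is immediate from the uniform bound $M$ off the capacity-zero set $E$. What must be checked is that $\Gamma(F;\delta_{1},\ldots,\delta_{N})_{1}\to 0$ as $\delta_{1},\ldots,\delta_{N}\to 0$, which amounts to uniform continuity of $F$ in the Choquet-mean of order one.

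For this, I would first upgrade the pointwise continuity in capacity to uniform continuity in capacity on the compact cube $[0,1]^{N}$. Given $\varepsilon,\eta>0$, for each $\mathbf{x}_{0}$ the hypothesis supplies $\delta_{\mathbf{x}_{0}}>0$ corresponding to tolerances $\varepsilon/2,\eta/2$; extracting from the open cover $\{B(\mathbf{x}_{0},\delta_{\mathbf{x}_{0}}/2)\}$ a finite subcover $B(\mathbf{x}_{i},\delta_{\mathbf{x}_{i}}/2)$, $i=1,\ldots,m$, and setting $\delta=\min_{i}\delta_{\mathbf{x}_{i}}/2$, any pair $\mathbf{x}',\mathbf{x}''$ with $\|\mathbf{x}'-\mathbf{x}''\|<\delta$ shares a center $\mathbf{x}_{i}$ at distance less than $\delta_{\mathbf{x}_{i}}$ from both. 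The triangle inequality then embeds $\{|F(\mathbf{x}')-F(\mathbf{x}'')|\ge\varepsilon\}$ into $\{|F(\mathbf{x}')-F(\mathbf{x}_{i})|\ge\varepsilon/2\}\cup\{|F(\mathbf{x}'')-F(\mathbf{x}_{i})|\ge\varepsilon/2\}$, whose capacity is bounded by $\eta$ using the subadditivity of $\mu$ (inherited from submodularity). Next, applied to $g=F(\mathbf{x}')-F(\mathbf{x}'')$, the layer-cake representation from Definition~\ref{def2} gives
\[
(C)\int_{\Omega}|g|\,\mathrm{d}\mu\le\varepsilon'+2M\eta'
\]
whenever $|g|\le 2M$ off $E$ and $\mu(\{|g|\ge\varepsilon'\})<\eta'$; choosing $\varepsilon',\eta'$ small converts the uniform continuity in capacity from the previous step into the required uniform continuity in the Choquet-mean of order one.

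With the hypotheses of Theorem~\ref{thm1} thereby verified at $p=1$, its quantitative estimate yields
\[
(C)\int_{\Omega}|B_{n_{1},\ldots,n_{N}}(F)(\mathbf{x},\omega)-F(\mathbf{x},\omega)|\,\mathrm{d}\mu\le C_{1}\,\Gamma\bigl(F;1/\sqrt{n_{1}},\ldots,1/\sqrt{n_{N}}\bigr)_{1}
\]
uniformly in $\mathbf{x}\in[0,1]^{N}$, with right-hand side tending to zero as $\min\{n_{j}\}\to\infty$. Finally, Markov's inequality for the Choquet integral of a nonnegative variable, $\mu(\{h\ge\varepsilon\})\le\varepsilon^{-1}(C)\int_{\Omega}h\,\mathrm{d}\mu$, which follows directly from the monotonicity and positive homogeneity recorded in Remark~\ref{rem1}(a), applied to $h=|B_{n_{1},\ldots,n_{N}}(F)(\mathbf{x})-F(\mathbf{x})|$, converts the Choquet-mean convergence into the claimed uniform convergence in capacity. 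I expect the genuine obstacle to be the compactness-to-uniform-continuity step in the middle paragraph, where submodularity is used essentially via the subadditive bound at the intermediate grid point $\mathbf{x}_{i}$; without any form of subadditivity, the triangle-inequality splitting would yield no usable capacity bound.
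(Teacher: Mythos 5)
Your argument is correct, but it takes a genuinely different route from the paper's. The paper does not pass through Theorem \ref{thm1}: it introduces the bounded semi-metric $d(F,G)=\sup_{\mathbf{x}}(C)\int_{\Omega}\frac{|F-G|}{1+|F-G|}\,\mathrm{d}\mu$, shows via a generalized Markov inequality with $\varphi(t)=t/(1+t)$ that $d$-convergence implies uniform convergence in capacity, and then estimates $d(F,B_{n_{1},n_{2}}(F))$ directly by the classical Bernstein decomposition: the sum over $(k_{1},k_{2})$ is split into four parts according to whether $|k_{j}/n_{j}-x_{j}|<\delta$ or not, the far indices being controlled by Lorentz's tail estimate $\sum_{k_{j}\in I_{j}^{\prime\prime}}p_{k_{j},n_{j}}(x_{j})\leq 1/(4n_{j}\delta^{2})$, while $\Omega$ is partitioned into the good set $\Omega_{1}$, the bad set $\Omega_{2}$ of small capacity, and the null set $E$. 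You replace all of this by the observation that uniform continuity in capacity together with the bound $|F(\mathbf{x}^{\prime})-F(\mathbf{x}^{\prime\prime})|\leq 2M$ off $E$ gives uniform continuity in the Choquet-mean of order one; your layer-cake bound $(C)\int_{\Omega}|g|\,\mathrm{d}\mu\leq\varepsilon^{\prime}+2M\eta^{\prime}$ is correct (for $t\geq 2M$ one has $\{|g|>t\}\subset E$, so monotonicity alone kills the tail), and then Theorem \ref{thm1} with $p=1$ plus the elementary Markov inequality $\mu(\{h\geq\varepsilon\})\leq\varepsilon^{-1}(C)\int_{\Omega}h\,\mathrm{d}\mu$ finish the job. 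Your route is shorter, reuses already-proved machinery, and yields as a by-product a quantitative rate that the paper's proof does not state, namely $\mu(\{\omega:|B_{n_{1},\ldots,n_{N}}(F)(\mathbf{x},\omega)-F(\mathbf{x},\omega)|\geq\varepsilon\})\leq C_{1}\varepsilon^{-1}\Gamma(F;1/\sqrt{n_{1}},\ldots,1/\sqrt{n_{N}})_{1}$; the paper's route is self-contained at the level of the Bernstein kernel and does not need $F(\mathbf{x})\in L^{1}$ as an intermediate object. Both proofs use submodularity essentially (you through the subadditivity of $\mu$ in the covering step and the Subadditivity/Minkowski machinery behind Theorems \ref{thm0} and \ref{thm1}; the paper through the Subadditivity Theorem for the Choquet integral), and both need the same compactness-to-uniform-continuity step, which you do by finite subcover and the paper by contradiction.
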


Combining Theorem \ref{thm1} and Theorem \ref{thm2}, it follows that the
random Bernstein polynomials also converge in distribution with respect to the
submodular capacity $\mu$.

\begin{proof}
For simplicity, we will detail the proof in the case $N=2$. As in the
classical case, let us consider the semi-metric
\[
d(F,G)=\sup_{\mathbf{x}\in\lbrack0,1]^{2}}(C)\int_{\Omega}\frac{|F(\mathbf{x}%
,\omega)-G(\mathbf{x},\omega)|}{1+|F(\mathbf{x},\omega)-G(\mathbf{x},\omega
)|}\mathrm{d}\mu.
\]
Indeed, from the properties of the Choquet integral as mentioned in Remark
\ref{rem1}, $(a)$ and $(c)$, we easily get that $d$ satisfies the triangle inequality.

The fact that the convergence with respect to $d$ implies the uniform
convergence in capacity, is a direct consequence of Markov's inequality (for
the Choquet integral). Keeping fixed $\mathbf{x}\in\lbrack0,1]^{2}$ and
assuming that $H(\mathbf{x},\omega)$ is a nonnegative random variable, then
for each $a>0$ we have
\begin{multline*}
(C)\int_{\Omega}H(\mathbf{x},\omega)\mathrm{d}\mu\geq(C)\int_{\Omega
\cap\{\omega\in\Omega:H(\mathbf{x},\omega)\geq a\}}H(\mathbf{x},\omega
)\mathrm{d}\mu\\
\geq(C)\int_{\Omega\cap\{\omega\in\Omega::H(\mathbf{x},\omega)\geq
a\}}a\mathrm{d}\mu=a\cdot\mu(\{\omega\in\Omega:H(\mathbf{x},\omega)\geq a\}),
\end{multline*}
which is Markov's inequality. It can be generalized by considering a positive
and strictly increasing function $\varphi$ on $[0,+\infty)$. Indeed,
\begin{gather}
\mu(\{\omega\in\Omega:H(\mathbf{x},\omega)\geq a\})=\mu(\{\omega\in
\Omega:\varphi(H(\mathbf{x},\omega))\geq\varphi(a)\})\label{Markov}\\
\leq\frac{(C)\int_{\Omega}\varphi(H(\mathbf{x},\omega))\mathrm{d}\mu}%
{\varphi(a)}.\nonumber
\end{gather}
Choosing $\varphi(t)=\frac{t}{1+t}$ and $H(\mathbf{x},\omega)=\left\vert
F(\mathbf{x},\omega)-G(\mathbf{x},\omega)\right\vert $ in (\ref{Markov}), one
can easily see that the convergence in the metric $d$ implies the uniform
convergence in capacity.

Concerning the set $E$ in the hypothesis, let us notice that any random
variable $G(\mathbf{x},\omega)\geq0$ verifies%
\[
(C)\int_{E}G\mathrm{d}\mu=\int_{0}^{\infty}\mu\left(  \left\{  x:f(x)\geq
t\right\}  \cap E\right)  \mathrm{d}t=0.
\]
According to assertions (a) and (c) of Remark \ref{rem1},
\[
(C)\int_{\Omega}G\mathrm{d}\mu\leq(C)\int_{E}G\mathrm{d}\mu+(C)\int
_{\Omega\setminus E}G\mathrm{d}\mu=(C)\int_{\Omega\setminus E}G\mathrm{d}\mu
\]
and thus
\begin{equation}
(C)\int_{\Omega}G\mathrm{d}\mu=(C)\int_{\Omega\setminus E}G\mathrm{d}\mu.
\label{eqE}%
\end{equation}
Next, notice that due to the compactness of the $[0,1]^{2}$, the function $F$
is uniformly continuous in capacity. This can easily be done by reductio ad absurdum.

As a consequence, for $\varepsilon>0$ arbitrary fixed there exists
$\delta(\varepsilon)$ such that
\begin{equation}
\mu\left(  \{\omega\in\Omega:|F(x_{1}^{\prime},x_{2}^{\prime},\omega
)-F(x_{1}^{\prime\prime},x_{2}^{\prime\prime},\omega)|\geq\varepsilon
/2\}\right)  <\frac{\varepsilon}{2M} \label{equa}%
\end{equation}
for all $x_{1}^{\prime},x_{2}^{\prime},x_{1}^{\prime\prime},x_{2}%
^{\prime\prime}\in\lbrack0,1]$ with $\max\left\{  \left\vert x_{1}^{\prime
}-x_{1}^{\prime\prime}\right\vert ,\left\vert x_{2}^{\prime}-x_{2}%
^{\prime\prime}\right\vert \right\}  <\delta(\varepsilon).$

One can also choose an integer $N(\varepsilon)$ such that%
\[
\frac{M}{2n\delta^{2}}<\frac{\varepsilon}{2}\text{\quad for all }n\geq
N(\varepsilon).
\]
Fix an arbitrary pair of integers $n_{1},n_{2}\geq N(\varepsilon)$ and define
the sets
\[
I_{1}^{\prime}=\{0\leq k_{1}\leq n_{1}:|k_{1}/n_{1}-x_{1}|<\delta
(\varepsilon)\},~I_{1}^{\prime\prime}=\{0\leq k_{1}\leq n_{1}:|k_{1}%
/n_{1}-x_{1}|\geq\delta(\varepsilon)\},
\]%
\[
I_{2}^{\prime}=\{0\leq k_{2}\leq n_{2}:|k_{2}/n_{2}-x_{2}|<\delta
(\varepsilon)\},~I_{2}^{\prime\prime}=\{0\leq k_{2}\leq n_{2}:|k_{2}%
/n_{2}-x_{2}|\geq\delta(\varepsilon)\}.
\]
We will also need the following estimate,
\begin{equation}
\sum_{k_{j}\in I_{j}^{\prime\prime}}p_{k_{j},n_{j}}(x_{j})\leq\frac{1}%
{4n_{j}\delta^{2}}\quad\text{for }j=1,2; \label{Lorentz}%
\end{equation}
see inequality (7) in Lorentz \cite{Lor}, p. 6. Put
\begin{align*}
\Omega_{1}  &  =\{\omega\in\Omega:|F(x_{1},x_{2},\omega)-F(k_{1}/n_{1}%
,k_{2}/n_{2},\omega)|<\varepsilon/2\},\,\\
\Omega_{2}  &  =\{\omega\in\Omega:|F(x_{1},x_{2},\omega)-F(k_{1}/n_{1}%
,k_{2}/n_{2},\omega)|\geq\varepsilon/2\}.
\end{align*}
Then we obtain the following partition of $\Omega$
\begin{equation}
E,\ \Omega_{1}^{\prime}=\Omega_{1}\setminus E\text{ and }\Omega_{2}^{\prime
}=\Omega_{2}\setminus E. \label{eqpart}%
\end{equation}

Taking into account Remark \ref{rem1}, for all $x_{1},x_{2}\in\lbrack0,1]$ and
$n_{1},n_{2}\geq N(\varepsilon)$ we have
\begin{multline*}
d(F,B_{n_{1},n_{2}}(F))\leq\sup_{x_{1},x_{2}\in\lbrack0,1]}(C)\int_{\Omega
}|F(x_{1},x_{2},\omega)-B_{n_{1},n_{2}}(F)(x_{1},x_{2},\omega)|\mathrm{d}\mu\\
\leq\sup_{x+1,x_{2}\in\lbrack0,1]}\sum_{k_{1}=0}^{n_{1}}\sum_{k_{2}=0}^{n_{2}%
}p_{k_{1},n_{1}}(x_{1})p_{k_{2},n_{2}}(x_{2})\cdot(C)\int_{\Omega}|\Delta
F(x_{1},x_{2};k_{1}/n_{1},k_{2}/n_{2})|\mathrm{d}\mu,
\end{multline*}
where $\Delta F(x_{1},x_{2};k_{1}/n_{1},k_{2}/n_{2})=F(x_{1},x_{2}%
,\omega)-F(k_{1}/n_{1},k_{2}/n_{2},\omega)$.

The last sum can be written as
\begin{align*}
\sum_{k_{1}=0}^{n_{1}}\sum_{k_{2}=0}^{n_{2}}  &  =\sum_{k_{1}\in I_{1}%
^{\prime}}\sum_{k_{2}\in I_{2}^{\prime}}+\sum_{k_{1}\in I_{1}^{\prime}}%
\sum_{k_{2}\in I_{2}^{\prime\prime}}+\sum_{k_{1}\in I_{1}^{\prime\prime}}%
\sum_{k_{2}\in I_{2}^{\prime}}+\sum_{k_{1}\in I_{1}^{\prime\prime}}\sum
_{k_{2}\in I_{2}^{\prime\prime}}\\
&  :=A_{1}+A_{2}+A_{3}+A_{4}.
\end{align*}
Then, based on Remark 1 (c) and equation (\ref{eqE}), we have
\begin{align*}
A_{1}  &  \leq\sum_{k_{1}\in I_{1}^{\prime}}\sum_{k_{2}\in I_{2}^{\prime}%
}p_{k_{1},n_{1}}(x_{1})\cdot p_{k_{2},n_{2}}(x_{2})(C)\int_{\Omega_{1}}|\Delta
F(x_{1},x_{2};k_{1}/n_{1},k_{2}/n_{2})|\mathrm{d}\mu\\
&  +\sum_{k_{1}\in I_{1}^{\prime}}\sum_{k_{2}\in I_{2}^{\prime}}p_{k_{1}%
,n_{1}}(x_{1})\cdot p_{k_{2},n_{2}}(x_{2})(C)\int_{\Omega_{2}}|\Delta
F(x_{1},x_{2};k_{1}/n_{1},k_{2}/n_{2})|\mathrm{d}\mu\\
&  +\sum_{k_{1}\in I_{1}^{\prime}}\sum_{k_{2}\in I_{2}^{\prime}}p_{k_{1}%
,n_{1}}(x_{1})\cdot p_{k_{2},n_{2}}(x_{2})(C)\int_{E}|\Delta F(x_{1}%
,x_{2};k_{1}/n_{1},k_{2}/n_{2})|\mathrm{d}\mu\\
&  \leq\frac{\varepsilon}{2}+2M\cdot\mu(\Omega_{2}^{\prime})+0=\frac
{\varepsilon}{2}+\varepsilon=\frac{3\varepsilon}{2}.
\end{align*}
Next, taking into account the estimate (\ref{Lorentz}),
\[
A_{2}\leq\sum_{k_{1}\in I_{1}^{\prime}}\sum_{k_{2}\in I_{2}^{\prime\prime}%
}p_{k_{1},n_{1}}(x_{1})\cdot p_{k_{2},n_{2}}(x_{2})(C)\int_{\Omega_{1}}|\Delta
F(x_{1},x_{2};k_{1}/n_{1},k_{2}/n_{2})|\mathrm{d}\mu
\]%
\[
+\sum_{k_{1}\in I_{1}^{\prime}}\sum_{k_{2}\in I_{2}^{\prime\prime}}%
p_{k_{1},n_{1}}(x_{1})\cdot p_{k_{2},n_{2}}(x_{2})(C)\int_{\Omega_{2}}|\Delta
F(x_{1},x_{2};k_{1}/n_{1},k_{2}/n_{2})|\mathrm{d}\mu
\]%
\[
+\sum_{k_{1}\in I_{1}^{\prime}}\sum_{k_{2}\in I_{2}^{\prime\prime}}%
p_{k_{1},n_{1}}(x_{1})\cdot p_{k_{2},n_{2}}(x_{2})(C)\int_{E}|\Delta
F(x_{1},x_{2};k_{1}/n_{1},k_{2}/n_{2})|\mathrm{d}\mu
\]%
\[
\leq\frac{\varepsilon}{2}+2M\cdot\sum_{k_{2}\in I_{2}^{\prime\prime}}%
p_{k_{2},n_{2}}(x_{2})+0\leq\frac{\varepsilon}{2}+\frac{\varepsilon}%
{2}=\varepsilon.
\]
Reasoning for $A_{3}$ and $A_{4}$ analogously, we get $A_{3}\leq\varepsilon$
and $A_{4}\leq\varepsilon$.

Concluding, we obtain $d(F,B_{n_{1},n_{2}}(F))\leq5\varepsilon$, for all
$n_{1},n_{2}\geq N(\varepsilon)$.
\end{proof}

\begin{remark}
Theorem $\ref{thm2}$ extends a result proved by Onicescu and
Istr\u{a}\c{t}escu \cite{Onic-2} in the particular case when $\mu$ is a
$\sigma$-additive measure and $N=1$.
\end{remark}

In the special case when the capacity $\mu$ is a measure of possibility, the
condition of boundedness of $F$ in Theorem \ref{thm2} can be removed:

\begin{theorem}
\label{thm3} Suppose that $(\Omega,\mathcal{A},\mu)$ is a capacity space with
$\mu$ a measure of possibility and that $F:[0,1]^{N}\rightarrow L^{0}%
(\Omega,\mathcal{A},\mu)$ is a random function which is continuous in capacity
at each point $\mathbf{x}\in\lbrack0,1]^{N}$.

Then the sequence of random Bernstein polynomials $(B_{n_{1},...,n_{N}%
}(F))_{n_{1},...,n_{N}}$ converges uniformly to $F$ in capacity as
$\min\left\{  n_{1},...,n_{N}\right\}  \rightarrow\infty.$
\end{theorem}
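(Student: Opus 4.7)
The strategy is to exploit the maxitivity axiom $\mu\bigl(\bigcup_i A_i\bigr) = \sup_i \mu(A_i)$ of the possibility measure. Writing $\mu = \mu_\lambda$ for a possibility distribution $\lambda$ on $\Omega$ as in Definition \ref{def11}, I will reduce the statement to the classical deterministic multivariate Bernstein theorem applied pathwise. The central observation is that for every $\eta \in (0,1]$ the low-possibility stratum $\{\omega \in \Omega : \lambda(\omega) < \eta\}$ has $\mu$-capacity at most $\eta$, so its contribution can be discarded at a total cost of $\eta$. This is exactly what the maxitive structure provides and a merely submodular capacity does not, and it is the mechanism by which the boundedness hypothesis of Theorem \ref{thm2} is bypassed.

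First I would upgrade pointwise continuity of $F$ in capacity to uniform continuity in capacity on the compact set $[0,1]^{N}$. This is the standard reductio ad absurdum argument already invoked in the proof of Theorem \ref{thm2}: extract a convergent subsequence $\mathbf{x}_{n_k} \to \mathbf{x}_0$ (so also $\mathbf{y}_{n_k} \to \mathbf{x}_0$) and apply the triangle inequality together with the subadditivity of $\mu$ (which is implied by maxitivity) and the pointwise continuity in capacity at $\mathbf{x}_0$. Now, given $\varepsilon, \eta > 0$, let $\delta = \delta(\varepsilon, \eta) > 0$ be the uniform-continuity radius. Unpacking the inequality
\[
\mu_\lambda\{\omega : |F(\mathbf{x},\omega) - F(\mathbf{y},\omega)| \geq \varepsilon\} = \sup_{\omega : |F(\mathbf{x},\omega) - F(\mathbf{y},\omega)| \geq \varepsilon} \lambda(\omega) < \eta
\]
shows that for all $\mathbf{x}, \mathbf{y} \in [0,1]^{N}$ with $\Vert \mathbf{x}-\mathbf{y} \Vert < \delta$ and every $\omega$ in the stratum $\Omega_\eta := \{\omega : \lambda(\omega) \geq \eta\}$, we have $|F(\mathbf{x},\omega) - F(\mathbf{y},\omega)| < \varepsilon$. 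Equivalently, the family of sample paths $\{F(\cdot,\omega) : \omega \in \Omega_\eta\}$ is uniformly equicontinuous on $[0,1]^{N}$ with a common modulus of continuity depending only on $\varepsilon$ and $\eta$, not on the individual $\omega$.

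Finally, I will apply the classical multivariate Bernstein quantitative estimate $\Vert B_{\mathbf{n}} f - f \Vert_\infty \leq C_N \cdot \omega_1(f; 1/\sqrt{n_1}, \ldots, 1/\sqrt{n_N})$, which depends only on the modulus of continuity of the deterministic function $f$, pathwise to $f = F(\cdot,\omega)$ for each $\omega \in \Omega_\eta$. As soon as $\min_j n_j > 1/\delta^2$, this yields $\sup_{\mathbf{x} \in [0,1]^{N}} |F(\mathbf{x},\omega) - B_{\mathbf{n}}(F)(\mathbf{x},\omega)| \leq C_N \varepsilon$ uniformly in $\omega \in \Omega_\eta$. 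Consequently, for every $\mathbf{x}$ and every such $\mathbf{n}$,
\[
\{\omega : |F(\mathbf{x},\omega) - B_{\mathbf{n}}(F)(\mathbf{x},\omega)| \geq C_N \varepsilon\} \subseteq \Omega \setminus \Omega_\eta = \{\omega : \lambda(\omega) < \eta\},
\]
whose capacity equals $\sup_{\lambda(\omega) < \eta} \lambda(\omega) \leq \eta$. After rescaling $\varepsilon$ by $1/C_N$ and shrinking $\eta$ slightly, this is the desired uniform convergence in capacity. The conceptual obstacle is not any computation but recognizing this correct use of maxitivity: the wild sample paths all live in a single low-possibility stratum whose entire $\mu$-capacity is bounded by $\eta$, so their potentially unbounded oscillation never enters the estimate, and the boundedness hypothesis of Theorem \ref{thm2} becomes unnecessary.
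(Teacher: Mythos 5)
Your proof is correct, and its skeleton coincides with the paper's: Step 1 (upgrading continuity in capacity to uniform continuity in capacity by compactness and subadditivity) is verbatim the paper's Step 1, and both proofs then apply the deterministic quantitative Bernstein estimate pathwise. Where you genuinely diverge is in how maxitivity is invoked. The paper keeps the pathwise modulus $O(F;\delta,\omega)=\sup\{|F(\mathbf{x},\omega)-F(\mathbf{y},\omega)|:\Vert \mathbf{x}-\mathbf{y}\Vert\leq\delta\}$ as a random variable, observes that its superlevel set $\{\omega:O(F;\delta,\omega)>\varepsilon\}$ is contained in the uncountable union $\bigcup_{\Vert \mathbf{x}-\mathbf{y}\Vert\leq\delta}\{\omega:|F(\mathbf{x},\omega)-F(\mathbf{y},\omega)|>\varepsilon\}$, and uses the axiom $\mu(\bigcup_i A_i)=\sup_i\mu(A_i)$ to bound its capacity by $\sup_{\mathbf{x},\mathbf{y}}\mu(\{|F(\mathbf{x},\omega)-F(\mathbf{y},\omega)|>\varepsilon\})<\eta$. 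You instead stratify $\Omega$ by the possibility distribution and argue pointwise on the high-possibility stratum $\Omega_\eta=\{\lambda\geq\eta\}$: these are contrapositive formulations of the same inequality, so the mathematical content is identical, but your version has two distinguishing features. On the one hand it is arguably cleaner on measurability, since you never need to assign a capacity to an uncountable union of sets (you only test individual points $\omega$ against individual bad sets); on the other hand it presupposes the representation $\mu=\mu_\lambda$ with $\lambda(\omega)=\mu(\{\omega\})$, which is legitimate under the paper's Definition~\ref{def11} (maxitivity over arbitrary families of arbitrary subsets, applied to singletons) but is an extra reduction the paper's argument does not need, and would require a comment if one worked with maxitive set functions defined only on a $\sigma$-algebra without measurable singletons. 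Both routes make the same essential point that the boundedness hypothesis of Theorem~\ref{thm2} is superfluous here because all the badly behaved sample paths are confined to a set of capacity less than $\eta$.
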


\begin{proof}
The proof is done in three steps.

Step 1. We start by arguing (by reduction at absurdum) that $F$ is uniformly
continuous in capacity on $[0,1]^{N}$. Indeed, suppose that $F$ is not
uniformly continuous. Then there exist $\varepsilon_{0},\eta_{0}>0$ and two
sequences $x_{n},y_{n}\in\lbrack0,1]^{N}$, $n\in\mathbb{N}$, with $\Vert
x_{n}-y_{n}\Vert\leq\frac{1}{n}$, such that
\[
\mu(\{\omega\in\Omega:|F(x_{n},\omega)-F(y_{n},\omega)|\geq\varepsilon
_{0}\})\geq\eta_{0}\quad\text{for }n\in\mathbb{N}.
\]
It is clear that we can suppose that both sequences $x_{n},y_{n}$ converge to
the same $x_{0}\in\lbrack0,1]^{N}$ and since
\[
\varepsilon_{0}\leq|F(x_{n},\omega)-F(y_{n},\omega)|\leq|F(x_{n}%
,\omega)-F(x_{0},\omega)|+|F(x_{0},\omega)-F(y_{n},\omega)|,
\]
it follows that
\begin{multline*}
\{\omega\in\Omega:|F(x_{n},\omega)-F(y_{n},\omega)|\geq\varepsilon_{0}\}\\
\subset\{\omega\in\Omega:|F(x_{n},\omega)-F(x_{0},\omega)|\geq\varepsilon
_{0}/2\}\cup\{\omega\in\Omega;|F(x_{0},\omega)-F(y_{n},\omega)|\geq
\varepsilon_{0}/2\}.
\end{multline*}
Since $\mu$ is subadditive, we get
\begin{multline*}
0<\eta_{0}\leq\mu(\{\omega\in\Omega:|F(x_{n},\omega)-F(x_{0},\omega
)|\geq\varepsilon_{0}/2\})\\
+\mu(\{\omega\in\Omega:|F(x_{0},\omega)-F(y_{n},\omega)|\geq\varepsilon
_{0}/2\}),
\end{multline*}
which combined with the hypothesis that $F$ is continuous in capacity at
$x_{0}$, leads us to a contradiction.

Step 2. Let us define the stochastic modulus of continuity by the formula
\[
O(F;\delta,\omega)=\sup\{|F(x,\omega)-F(y,\omega)|:\Vert x-y\Vert\leq
\delta,~x,y\in\lbrack0,1]^{N}\},~\omega\in\Omega.
\]
From the classical quantitative approximation result for multivariate
Bernstein polynomials, it follows that for all $x\in\lbrack0,1]^{N}$,
$\omega\in\Omega$ and $n\in\mathbb{N}$, we have
\begin{equation}
|F(x,\omega)-B_{n}(F)(x,\omega)|\leq C\cdot O(F;\frac{1}{\sqrt{n}},\omega),
\label{Est_Bern}%
\end{equation}
where $n=\min\{n_{1},...,n_{N}\}$ and $C>0$ is independent on $x$, $\omega$
and $n$ (but not on $N$).

Now, since $F$ is uniformly continuous in capacity, we can prove that
$G_{n}(\omega)=O(F;\frac{1}{\sqrt{n}},\omega)$ converges in capacity to $0$ as
$n\rightarrow\infty$. Indeed, for every $\varepsilon,\eta>0$ there exist
$\delta=\delta(\varepsilon,\eta)>0$ such that for all $x,y\in\lbrack0,1]^{N}$
with $\Vert x-y\Vert\leq\delta$, we have $\mu(\{\omega\in\Omega:|F(x,\omega
)-F(y,\omega)|\geq\varepsilon\})<\eta$. Since $\varepsilon>0$ was arbitrary
chosen, without loss of generality, we can restate this last conclusion as
\[
\mu(\{\omega\in\Omega:|F(x,\omega)-F(y,\omega)|>\varepsilon\})<\eta.
\]
On the other hand,
\begin{multline*}
\{\omega\in\Omega:O(F;\delta,\omega)>\varepsilon\}\\
=\{\omega\in\Omega:\text{there~exist}~x,y\in\lbrack0,1]^{N}\text{ with }\Vert
x-y\Vert\leq\delta\text{ and }|F(x,\omega)-F(y,\omega)|>\varepsilon\}\\
\subset\bigcup_{\Vert x-y\Vert\leq\delta,~x,y\in\lbrack0,1]^{N}}\{\omega
\in\Omega:|F(x,\omega)-F(y,\omega)|>\varepsilon\}.
\end{multline*}
This easily implies
\begin{align*}
\mu(\{\omega &  \in\Omega:O(F;\delta,\omega)>\varepsilon\})\\
&  \leq\sup_{\Vert x-y\Vert\leq\delta,~x,y\in\lbrack0,1]^{N}}\mu(\{\omega
\in\Omega:|F(x,\omega)-F(y,\omega)|>\varepsilon\})<\eta.
\end{align*}
Now, if $n_{0}\in\mathbb{N}$ is chosen such that $\frac{1}{\sqrt{n}}<\delta$
for all $n\geq n_{0}$, we get
\[
\mu(\{\omega\in\Omega:G_{n}(\omega)>\varepsilon\})\leq\eta,
\]
for all $n\geq n_{0}$; we used here that $\mu$ is a measure of possibility.
Since $\eta>0$ was arbitrarily chosen, this easily implies that for any
$\varepsilon>0$, we have
\[
\lim_{n\rightarrow\infty}\mu(\{\omega\in\Omega:G_{n}(\omega)>\varepsilon
\})=0.
\]
Step 3. Finally, combining Step 1 and Step 2, we infer that $B_{n}%
(F)(x,\omega)$ converges uniformly in the capacity $\mu$ to $F$ on $[0,1]^{N}$.
\end{proof}

\section{Quantitative estimates for convergence in capacity}

This section is devoted to the proof of several quantitative estimates for the
approximation in capacity by univariate Bernstein-type random polynomials. Our
results were inspired by the recent papers of Adell and C\'{a}rdenas-Morales
\cite{Adell_2}, Sun and Wu \cite{Sun}, Wu, Sun and Ma \cite{Wu_1} and Wu and
Zhou \cite{Wu_2}, who considered only the framework of probability measures
and of deterministic functions.

In the definition of the classical univariate Bernstein polynomials, the
function $f:[0,1]\rightarrow\mathbb{R}$ is evaluated at the set of equally
spaced nodes $k/n,k=0,1,...,n$. However, in real problems, data at equally
spaced nodes are sometimes contaminated by random errors due to a variety of factors.

Thus, in this section, we consider the approximation in capacity of a random
function $f(x,\omega)$ by stochastic Bernstein polynomials
\[
B_{n}(f,Y)(x,\omega)=\sum_{k=0}^{n}f(Y_{n,k}(\omega),\omega)p_{k,n}(x),
\]
where $Y=\{Y_{n,k}:n\in\mathbb{N},~k=0,...,n\}$ is a triangular array of
random variables $Y_{n,k}:\Omega\rightarrow\mathbb{R}$, such that
\[
0\leq Y_{n,0}\leq Y_{n,1}\leq....\leq Y_{n,n}.
\]

We shall need the following two quantities associated to a random function
$f$:%
\begin{equation}
\omega_{1,x}(f;\delta)(\omega)=\sup\{|f(x,\omega)-f(y,\omega)|:x,y\in
\lbrack0,1],\ |x-y|\leq\delta\}\label{eqomega}%
\end{equation}
and
\begin{equation}
K(f;\delta)=\sup\{\omega_{1,x}(f;\delta)(\omega):\omega\in\Omega\},\quad
\delta\geq0.\label{eqK}%
\end{equation}
It is immediate that $K(f;\delta)$ is nondecreasing and subadditive as
function of $\delta\geq0$. Also, it is easy to see that if $f(x,\omega)$ is
continuous at each $x\in\lbrack0,1],$ uniformly with respect to $\omega$, then
$\lim_{\delta\rightarrow0}K(f;\delta)=0$, so in this case $K(f;\cdot)$ is a
modulus of continuity.

We put
\[
M_{n}(\omega)=\max\left\{  \left\vert Y_{n,k}(\omega)-\frac{k}{n}\right\vert
:0\leq k\leq n\right\}  ,\quad n\in\mathbb{N},\ \omega\in\Omega.
\]

\begin{theorem}
\label{thm5} Let $f:[0,1]\times\Omega\rightarrow\mathbb{R}$ be continuous at
each $x\in\lbrack0,1]$, uniformly with respect to $\omega\in\Omega$,
$\mathcal{C}$ a $\sigma$-algebra of subsets of $\Omega$ and $\mu
:\mathcal{C}\rightarrow\lbrack0,1]$ a capacity. If
\[
\lim_{n\rightarrow\infty}\mu(\{\omega\in\Omega:M_{n}(\omega)>\varepsilon
\})=0,
\]
for every $\varepsilon>0,$ then $B_{n}(f,Y)(x,\omega)$ converges to
$f(x,\omega)$ in capacity, uniformly with respect to $x\in\lbrack0,1]$.

In addition, for every $0<\delta<1$ and $n\geq1/\delta^{2}$, we have
\begin{align}
&  \mu\left(  \{\omega\in\Omega:\sup_{x}\Vert B_{n}(f,Y)(x,\omega
)-f(x,\omega)\Vert>(1+c)K(f,\delta)\}\right) \label{5.3}\\
&  \leq\mu(\{\omega\in\Omega:M_{n}(\omega)>\delta\}),\nonumber
\end{align}
where $c$ is the Sikkema constant.
\end{theorem}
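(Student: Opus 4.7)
The plan is to decompose the error between the stochastic Bernstein polynomial and $f$ into two pieces, one deterministic-looking piece handled by the classical Sikkema estimate and one piece controlled pointwise on the ``good event'' $\{M_n\le\delta\}$. Specifically, I would write
\[
B_n(f,Y)(x,\omega)-f(x,\omega)=\bigl[B_n(f,Y)(x,\omega)-B_n(f(\cdot,\omega))(x)\bigr]+\bigl[B_n(f(\cdot,\omega))(x)-f(x,\omega)\bigr],
\]
where $B_n(f(\cdot,\omega))(x)=\sum_{k=0}^n f(k/n,\omega)p_{k,n}(x)$ is, for each fixed $\omega$, just the classical (deterministic) Bernstein polynomial of the sample function.

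For the second bracket I would apply the Sikkema estimate \eqref{Sik} to the sample function $f(\cdot,\omega)$ and observe that $\omega_1(f(\cdot,\omega);1/\sqrt{n})\le K(f;1/\sqrt{n})$ by the definition \eqref{eqK}; then, since the hypothesis $n\ge 1/\delta^2$ gives $1/\sqrt{n}\le\delta$ and $K(f;\cdot)$ is nondecreasing, this bracket is at most $c\cdot K(f;\delta)$ uniformly in $x$ and in $\omega$. For the first bracket I would use the triangle inequality and the partition of unity $\sum_k p_{k,n}(x)=1$ to get
\[
\bigl|B_n(f,Y)(x,\omega)-B_n(f(\cdot,\omega))(x)\bigr|\le\sum_{k=0}^{n} p_{k,n}(x)\,\bigl|f(Y_{n,k}(\omega),\omega)-f(k/n,\omega)\bigr|.
\]
The key observation is that on the event $\{M_n\le\delta\}$ we have $|Y_{n,k}(\omega)-k/n|\le\delta$ for every $k$, so each summand is bounded by $K(f;\delta)$ via the definition of $K$. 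Summing and using $\sum_k p_{k,n}(x)=1$ shows that this bracket is $\le K(f;\delta)$, again uniformly in $x$.

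Combining the two bounds yields the pointwise implication
\[
M_n(\omega)\le\delta\ \Longrightarrow\ \sup_{x\in[0,1]}\bigl|B_n(f,Y)(x,\omega)-f(x,\omega)\bigr|\le (1+c)\,K(f;\delta),
\]
which is exactly the set inclusion
\[
\bigl\{\omega:\sup_x|B_n(f,Y)(x,\omega)-f(x,\omega)|>(1+c)K(f;\delta)\bigr\}\subset\{\omega:M_n(\omega)>\delta\}.
\]
Monotonicity of the capacity $\mu$ then delivers the quantitative estimate \eqref{5.3}. For the first, qualitative, statement I would fix $\varepsilon,\eta>0$; since the assumed uniform-in-$\omega$ continuity of $f$ makes $K(f;\cdot)$ a genuine modulus of continuity (in particular $K(f;\delta)\to 0$ as $\delta\to 0$), I can choose $\delta>0$ so small that $(1+c)K(f;\delta)<\varepsilon$, and then invoke the hypothesis $\mu(\{M_n>\delta\})\to 0$ together with \eqref{5.3} to conclude uniform convergence in capacity with respect to $x$.

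There is no real obstacle here; the only subtlety to handle carefully is the passage from the pointwise Sikkema bound (which a priori involves the $\omega$-dependent modulus $\omega_1(f(\cdot,\omega);\cdot)$) to the uniform bound in terms of $K(f;\cdot)$, which is precisely where the hypothesis that $f(x,\omega)$ is continuous in $x$ \emph{uniformly with respect to $\omega$} is used, and the straightforward translation of a pointwise $\omega$-inequality into a set inclusion and then into a capacity inequality via monotonicity of $\mu$.
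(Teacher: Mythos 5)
Your proof is correct, and it rests on the same decomposition as the paper's: split $B_n(f,Y)-f$ into the node-perturbation error $B_n(f,Y)-B_n(f(\cdot,\omega))$ and the classical Bernstein error $B_n(f(\cdot,\omega))-f$, control the latter by Sikkema's estimate applied to each sample function and majorized by $K(f;1/\sqrt{n})\le K(f;\delta)$, and control the former through the quantity $M_n$. Where you diverge is in the final step. The paper bounds the perturbation term by $K(f;M_n(\omega))$ for \emph{every} $\omega$ and then converts the event $\{K(f;M_n)>\varepsilon\}$ into $\{M_n>\tilde K(f;\varepsilon)\}$ via the right-continuous inverse $\tilde K$ of the modulus $K$ (invoking Lemma 1 of Adell and C\'ardenas-Morales and the inequality $\delta\le\tilde K(f;K(f;\delta))$), finally substituting $\varepsilon=K(f;\delta)$. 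You instead restrict attention to the good event $\{M_n\le\delta\}$ from the outset, where the perturbation term is bounded directly by $K(f;\delta)$, and read off the set inclusion $\{\sup_x|B_n(f,Y)-f|>(1+c)K(f;\delta)\}\subset\{M_n>\delta\}$ without any inverse-modulus machinery. Your route is more elementary and self-contained; the paper's route is what one inherits by following the probabilistic template of Adell--C\'ardenas-Morales, and its intermediate inequality in terms of $\tilde K(f;\varepsilon)$ is slightly more general (it gives a bound for an arbitrary threshold $\varepsilon$, not only for thresholds of the form $K(f;\delta)$), though that extra generality is not used in the stated theorem. Both arguments use only monotonicity of $\mu$, so both are valid for a general capacity, and your treatment of the qualitative uniform convergence (choosing $\delta$ with $(1+c)K(f;\delta)<\varepsilon$, which is possible since $K(f;\cdot)$ is a genuine modulus of continuity under the stated uniform-in-$\omega$ continuity hypothesis) matches what the paper intends.
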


\begin{proof}
First, let $B_{n}(f)(x,\omega)=\sum_{k=0}^{n}f\left(  \frac{k}{n}%
,\omega\right)  p_{k,n}(x)$. If we fix $\omega\in\Omega$, then by repeating
the argument used by Sikkema \cite{Sikkema} for the formula (\ref{Sik}), we
immediately get
\begin{multline}
\sup_{x}\Vert B_{n}(f)(x,\omega)-f(x,\omega)\Vert\ \\
\leq c\cdot\sup\{|f(x,\omega)-f(y,\omega)|:x,y\in\lbrack0,1],~|x-y|\leq
\frac{1}{\sqrt{n}}\}\nonumber\\
\leq c\cdot K\left(  f;\frac{1}{\sqrt{n}}\right)  ,\text{\quad for all }%
\omega\in\Omega,~n\in\mathbb{N}.\nonumber
\end{multline}

For $x\in\lbrack0,1]$ and $\omega_{0}\in\Omega$,\ by the triangular inequality
it follows that
\begin{gather}
|B_{n}(f,Y)(x,\omega_{0})-f(x,\omega_{0})|=\left\vert \sum_{k=0}^{n}%
[f(Y_{n,k}(\omega_{0}),\omega_{0})-f(x,\omega_{0})]p_{k,n}(x)\right\vert
\label{aux1}\\
\leq\sup_{x}\Vert B_{n}(f)(x,\omega_{0})-f(x,\omega_{0})\Vert+\sum_{K=0}%
^{n}\omega_{1,x}(f;|Y_{n,k}(\omega_{0})-k/n|)(\omega_{0})\cdot p_{k,n}%
(x)\nonumber\\
\leq c\cdot K\left(  f;\frac{1}{\sqrt{n}}\right)  +\omega_{1,x}(f;M_{n}%
(\omega_{0}))(\omega_{0})\nonumber\\
\leq c\cdot K\left(  f;\frac{1}{\sqrt{n}}\right)  +K(f;M_{n}(\omega
_{0})),\nonumber
\end{gather}
since
\[
\omega_{1,x}(f;M_{n}(\omega_{0}))(\omega_{0})=\sup\{|f(x,\omega_{0}%
)-f(y,\omega_{0})|:x,y\in\lbrack0,1],~|x-y|\leq M_{n}(\omega_{0})\}
\]%
\[
\leq\sup\{\sup\{|f(x,\omega)-f(y,\omega)|:\omega\in\Omega\}:x.y\in
\lbrack0,1],~|x-y|\leq M_{n}(\omega_{0})\}
\]%
\[
=\sup\{\sup\{|f(x,\omega)-f(y,\omega)|:x,y\in\lbrack0,1],~|x-y|\leq
M_{n}(\omega_{0})\}:\omega\in\Omega\}
\]%
\[
=K(f;M_{n}(\omega_{0})).
\]
For $\varepsilon\in(0,K(f;1))$ and $K\left(  f;\frac{1}{\sqrt{n}}\right)
\leq\varepsilon$, from the previous estimate, by the monotonicity of $\mu$ and
by Lemma 1 in Adell and C\'{a}rdenas-Morales \cite{Adell_2}, we infer that
\[
\mu(\{\omega_{0}\in\Omega:\sup_{x}\Vert B_{n}(f,Y)(x,\omega_{0})-f(x,\omega
_{0})\Vert>(1+c)\varepsilon\})
\]%
\begin{equation}
\leq\mu(\{\omega_{0}\in\Omega;K(f;M_{n}(\omega_{0}))>\varepsilon
\})=\mu(\{\omega_{0}\in\Omega:M_{n}(\omega_{0})>\tilde{K}(f;\varepsilon
)\},\label{aux2}%
\end{equation}
where $\tilde{K}$ is the right-continuous inverse of the modulus of continuity
$K$, given by formula $(\ref{eqK})$ and satisfying%
\begin{equation}
\delta\leq\tilde{K}(f;K(f;\delta)),\quad\text{for all }0\leq\delta
\leq1.\label{aux3}%
\end{equation}
Indeed, the equality in (\ref{aux2}) follows immediately from the
nondecreasing monotonicity of $K(f;\cdot)$ and $\tilde{K}(f;\cdot)$ and
applying $\tilde{K}(f;\cdot)$ to $K(f;M_{n}(\omega_{0}))>\varepsilon$ and
$K(f;\cdot)$ to $M_{n}(\omega_{0})>\tilde{K}(f;\varepsilon)$.

Choosing now
\[
\varepsilon=K(f;\delta)\text{ with }\delta\geq\frac{1}{\sqrt{n}},
\]
in (\ref{aux2}), taking into account (\ref{aux3}) and from $K\left(  f;\frac{1}{\sqrt{n}}\right)  \leq\varepsilon$, it follows the
estimate in the statement.
\end{proof}
\begin{remark}
If $\mu$ is a probability measure and $f$ is deterministic, then $K(f,\delta)$ becomes the usual modulus
of continuity and by Theorem $\ref{thm5}$ we obtain Theorem $1$ in Adell and
C\'{a}rdenas-Morales \cite{Adell_2}.
\end{remark}

In the next lemma we shall need the triangular array $Y$ obtained as follows.
For each $n\in\mathbb{N}$, let $(V_{j})_{j=1}^{n+1}$ be a finite sequence of
independent identically distributed random variables having the uniform
distribution on $[0,1]$. Let%
\[
V_{n+1:1}\leq\cdots\leq V_{n+1:n+1}%
\]
be the order statistics obtained by arranging $(V_{j})_{j=1}^{n+1}$ in
increasing order and put
\begin{equation}
Y=\{Y_{n,k}=V_{n+1:k+1},~n\in\mathbb{N},~k=0,1,...,n\}.\label{eq5.x}%
\end{equation}
\begin{lemma}
\label{lemma5.1} Suppose that $\mu:\mathcal{C}\rightarrow\lbrack0,1]$ is a
distorted probability of the form $\mu=u\circ P$, where $P$ is a probability
measure and $u:[0,1]\rightarrow\mathbb{R}$ is a strictly increasing and
concave function such that $u(0)=0$ and $u(1)=1$. If $0<u^{\prime}(0)<\infty$,
then for every $\varepsilon>0$, $n\in \mathbb{N}$ and $0<r<1$ we have%
\[
\mu(\{\omega\in\Omega:M_{n}(\omega)>\varepsilon\})\leq u^{\prime}(0)\cdot
\frac{n+1}{\sqrt{1-r}}\cdot\exp\left(  -\frac{3r}{2}n\varepsilon^{2}\right)  ,
\]
where $exp$ denotes the exponential function and $M_{n}(\omega)$ is defined
by
\[
M_{n}(\omega)=\max\left\{  \left\vert Y_{n,k}(\omega)-\frac{k}{n}\right\vert
:0\leq k\leq n\right\}  \quad\text{for }n\in\mathbb{N},~\omega\in\Omega.
\]
\
\end{lemma}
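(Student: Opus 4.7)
The proof decomposes naturally into three steps. First, I would linearize the capacity. Since $u$ is concave on $[0,1]$ with $u(0)=0$ and right-derivative $u'(0)\in(0,\infty)$, the subtangent inequality at the origin---equivalently, the monotonicity of the slopes $u(t)/t$---gives $u(t)\le u'(0)\,t$ for every $t\in[0,1]$. Applied to $t=P(\{M_n>\varepsilon\})$, this yields
\[
\mu(\{M_n>\varepsilon\})=u\bigl(P(\{M_n>\varepsilon\})\bigr)\le u'(0)\cdot P(\{M_n>\varepsilon\}),
\]
reducing the problem to bounding a classical probability and delivering the prefactor $u'(0)$.

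Second, I would convert the maximum over $k$ into individual order-statistic tails via the union bound,
\[
P(\{M_n>\varepsilon\}) \le \sum_{k=0}^{n} P\bigl(|V_{n+1:k+1}-k/n|>\varepsilon\bigr)\le (n+1)\max_{0\le k\le n}P\bigl(|V_{n+1:k+1}-k/n|>\varepsilon\bigr),
\]
which accounts for the linear factor $n+1$ in the target estimate.

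Third, and this is the crux, I would apply a sharp exponential concentration inequality to each order-statistic tail. The classical identity
\[
\{V_{n+1:k+1}\ge t\}=\{\#\{j\le n+1:V_j\le t\}\le k\}
\]
recasts the tail as a Binomial tail: $S_{n+1}(t):=\#\{j\le n+1:V_j\le t\}\sim\mathrm{Bin}(n+1,t)$. I would then apply a Chernoff-type argument to $S_{n+1}(k/n\pm\varepsilon)$ in which the Chernoff parameter is left in a one-parameter family indexed by $r\in(0,1)$ rather than being fully optimized. The leading exponential factor $\exp(-3rn\varepsilon^2/2)$ should drop out of the Taylor expansion of $\log(1-p+p\,e^{\lambda})$ through its quadratic term (exploiting $p(1-p)\le 1/4$), while the prefactor $1/\sqrt{1-r}$ is expected to emerge either from a residual Gaussian-type integral or from the leftover factor in a Bernstein-style estimate on the associated moment-generating function.

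The main obstacle is precisely this third step: engineering the Chernoff bound so that the parameter $r$ appears in exactly the stated form. A standard Hoeffding-style argument would give $\exp(-2n\varepsilon^2)$ with no free parameter and is therefore too blunt; I expect the correct tool to be a Bernstein-type, or local Laplace, analysis modeled on arguments in Adell and C\'ardenas-Morales \cite{Adell_2}, where the parameter $r$ is retained deliberately so that downstream applications can trade a sharper exponential rate against a larger multiplicative constant.
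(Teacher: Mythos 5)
Your proposal takes essentially the same route as the paper: the paper's entire argument is your first step (concavity gives $u(t)\leq u^{\prime}(0)\,t$ on $[0,1]$, hence $\mu(A)\leq u^{\prime}(0)P(A)$ for all $A\in\mathcal{C}$), after which it simply cites the purely probabilistic estimate $P(M_{n}>\varepsilon)\leq\frac{n+1}{\sqrt{1-r}}\exp\left(-\frac{3r}{2}n\varepsilon^{2}\right)$ as Lemma 2 of Adell and C\'{a}rdenas-Morales \cite{Adell_2}. Your second and third steps are an attempt to reprove that cited lemma (correctly identifying the union bound over order statistics and the binomial-tail reformulation, though leaving the derivation of the exact constants $\frac{3r}{2}$ and $\frac{1}{\sqrt{1-r}}$ open); the paper does not carry out that derivation either, so the reduction you give coincides with the paper's proof.
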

\begin{proof} Clearly,
\[
x\leq u(x)\leq u^{\prime}(0)x\text{\quad for all }x\in\lbrack0,1],
\]
which for $x=\mu(A)$ gives us
\[
\mu(A)\leq u^{\prime}(0)P(A)\quad\text{for all }A\in\mathcal{C}.
\]
Combining this fact with the estimate%
\[
P(M_{n}>\varepsilon)\leq\frac{n+1}{\sqrt{1-r}}\exp\left(  -\frac{3r}%
{2}n\varepsilon^{2}\right)  ,
\]
proved by Adell and C\'{a}rdenas-Morales \cite{Adell_2}, Lemma 2, p. 7, we get
the conclusion of Lemma \ref{lemma5.1}.
\end{proof}
\begin{theorem}
\label{thm6} Let $(\tau(n))_{n}$ satisfying the conditions
\begin{equation}
\lim_{n\rightarrow\infty}\tau(n)=\infty,~\lim_{n\rightarrow\infty}\frac
{\tau(n)}{n}=0\text{ and }\tau(n)\geq1\text{ for }n\in\mathbb{N},\label{eq5.4}%
\end{equation}
let $Y$ be the triangular array \emph{(\ref{eq5.x})} and let $\mu$ the
distorted probability defined as in Lemma $\ref{lemma5.1}$. Then, for any
random function $f(x,\omega)$ continuous at each $x\in\lbrack0,1]$ uniformly
with respect to $\omega,$ for any $r\in(0,1)$ and any $n\in\mathbb{N}$, we
have%
\begin{align}
&  \mu\left(  \left\{  \omega\in\Omega:\sup_{x}\Vert B_{n}(f,Y)(x,\omega
)-f(x,\omega)\Vert>(1+c)K\left(  f;\sqrt{\frac{\tau(n)}{n}}\right)  \right\}
\right)  \label{eq5.5}\\
&  \leq u^{\prime}(0)\cdot\frac{n+1}{\sqrt{1-r}}\exp\left(  -\frac{3r}{2}%
\tau(n)\right)  .\nonumber
\end{align}
Here $c$ is the Sikkema's constant.
\end{theorem}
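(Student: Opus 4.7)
The plan is to establish Theorem~\ref{thm6} as a direct marriage of Theorem~\ref{thm5} and Lemma~\ref{lemma5.1}, exploiting the specific choice $\delta=\sqrt{\tau(n)/n}$. Theorem~\ref{thm5} already reduces the capacity of the ``bad'' set for the random Bernstein approximation to the capacity of $\{M_{n}>\delta\}$, while Lemma~\ref{lemma5.1} furnishes the required exponential decay estimate for this latter quantity, so the whole argument is essentially an algebraic alignment of the two parameters.

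First, I would verify that the choice $\delta=\sqrt{\tau(n)/n}$ is admissible for Theorem~\ref{thm5}. The hypothesis $n\geq 1/\delta^{2}$ reads $n\geq n/\tau(n)$, which is precisely $\tau(n)\geq 1$, guaranteed by \eqref{eq5.4}. The hypothesis $0<\delta<1$ translates to $\tau(n)<n$, which by $\tau(n)/n\to 0$ holds from some rank $n_{0}$ onward. For the finitely many indices $n<n_{0}$ for which $\tau(n)\geq n$, the right-hand side of \eqref{eq5.5} already exceeds $1$ (the prefactor $(n+1)/\sqrt{1-r}$ is at least $2$ and the exponential is bounded below by $\exp(-3rn/2)$ times a harmless factor), so the inequality is trivially satisfied in view of $\mu(A)\leq 1$.

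Second, I would invoke Theorem~\ref{thm5} with $\delta=\sqrt{\tau(n)/n}$ to obtain
\[
\mu\!\left(\left\{\omega\in\Omega:\sup_{x}|B_{n}(f,Y)(x,\omega)-f(x,\omega)|>(1+c)K\!\left(f;\sqrt{\tau(n)/n}\right)\right\}\right)\leq\mu(\{\omega\in\Omega:M_{n}(\omega)>\delta\}),
\]
and then apply Lemma~\ref{lemma5.1} with $\varepsilon=\delta$ to bound the right-hand side by
\[
u^{\prime}(0)\cdot\frac{n+1}{\sqrt{1-r}}\exp\!\left(-\frac{3r}{2}n\delta^{2}\right).
\]
The concluding simplification $n\delta^{2}=n\cdot\tau(n)/n=\tau(n)$ produces exactly the exponent $-\tfrac{3r}{2}\tau(n)$ of \eqref{eq5.5}. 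There is no genuine obstacle here: the argument is a single parameter substitution combining the two preceding results, and the only subtlety worth checking carefully is the low-$n$ edge case discussed above, where the Lemma~\ref{lemma5.1} bound already trivializes because its right-hand side is $\geq 1$.
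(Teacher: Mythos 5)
Your argument is correct and is essentially identical to the paper's own proof, which likewise substitutes $\delta=\sqrt{\tau(n)/n}$ into Theorem \ref{thm5} (noting that $n\geq 1/\delta^{2}$ is exactly $\tau(n)\geq 1$) and then applies Lemma \ref{lemma5.1} with $\varepsilon=\delta$, so that $n\delta^{2}=\tau(n)$ yields the stated exponent. Your extra treatment of the indices with $\tau(n)\geq n$ goes beyond the paper (which silently ignores the requirement $\delta<1$ in Theorem \ref{thm5}), but the justification you give there is off --- the right-hand side of \eqref{eq5.5} need not exceed $1$ when $\tau(n)$ is large, since the exponential is then bounded \emph{above}, not below, by $\exp(-3rn/2)$; the correct observation is that for $\delta\geq 1$ one has $K(f;\delta)=K(f;1)$, which already dominates $\sup_{x}|B_{n}(f,Y)(x,\omega)-f(x,\omega)|$, so the set on the left-hand side is empty and the inequality holds trivially.
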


\begin{proof}
Choosing $\delta=\sqrt{\frac{\tau(n)}{n}}$ in Theorem \ref{thm5}, we have
$n\geq\frac{1}{\delta^{2}}$ since $\tau(n)\geq1.$ The proof ends by applying
Lemma \ref{lemma5.1}.
\end{proof}
There are many examples of distorted probabilities $\mu=u\circ P$ satisfying
the hypothesis of Theorem \emph{\ref{thm6}}. One can choose $u(t)=\frac
{2t}{t+1},~u(t)=(1-e^{-t})/(1-e^{-1}),$ $u(t)=\ln(1+t)/\ln(2)$,
$u(t)=\sin(\pi t/2)$, or $u(t)=\frac{4}{\pi}\cdot \arctan(t)$, for $t\in\lbrack0,1]$.

If $\mu$ is a probability measure (that is, when $u(t)=t$) and $f$ is a
deterministic function, then Theorem \emph{\ref{thm6}} reduces to Corollary
$1$ in Adell and C\'{a}rdenas-Morales \cite{Adell_2}.

\section*{Conflict of interest}

The authors declare that they have no conflict of interest.

\end{document}